\documentclass[11pt]{amsart}

\usepackage[T1]{fontenc}
\usepackage[utf8]{inputenc}
\usepackage{lmodern}
\usepackage{microtype}

\usepackage{amsmath,amssymb,mathtools}

\usepackage[margin=1in]{geometry}

\usepackage{aliascnt}
\usepackage[
    colorlinks=true,
    linkcolor=blue,
    citecolor=blue,
    urlcolor=blue
]{hyperref}
\usepackage[nameinlink,capitalize,noabbrev]{cleveref}

\theoremstyle{plain}

\newtheorem{theorem}{Theorem}[section]

\newaliascnt{proposition}{theorem}
\newtheorem{proposition}[proposition]{Proposition}
\aliascntresetthe{proposition}

\newaliascnt{lemma}{theorem}
\newtheorem{lemma}[lemma]{Lemma}
\aliascntresetthe{lemma}

\newaliascnt{corollary}{theorem}
\newtheorem{corollary}[corollary]{Corollary}
\aliascntresetthe{corollary}

\theoremstyle{definition}

\newaliascnt{definition}{theorem}
\newtheorem{definition}[definition]{Definition}
\aliascntresetthe{definition}

\theoremstyle{remark}

\newaliascnt{remark}{theorem}
\newtheorem{remark}[remark]{Remark}
\aliascntresetthe{remark}

\crefname{theorem}{theorem}{theorems}
\crefname{proposition}{proposition}{propositions}
\crefname{lemma}{lemma}{lemmas}
\crefname{corollary}{corollary}{corollaries}
\crefname{definition}{definition}{definitions}
\crefname{remark}{remark}{remarks}

\newcommand{\bbE}{\mathbb E}
\newcommand{\bbP}{\mathbb P}
\newcommand{\calB}{\mathcal B}
\newcommand{\calC}{\mathcal C}
\newcommand{\calH}{\mathcal H}
\newcommand{\calN}{\mathcal N}
\newcommand{\calR}{\mathcal R}
\newcommand{\calX}{\mathcal X}
\newcommand{\calY}{\mathcal Y}
\newcommand{\cost}{\operatorname{cost}}
\newcommand{\covol}{\operatorname{covol}}
\newcommand{\inte}{\operatorname{int}}
\newcommand{\eint}{\mathfrak e}
\newcommand{\one}{\mathbf 1}

\title{A Product-Neighbourhood Criterion for Fixed Price One}
\author{Raz Slutsky}
\address{Merton College, University of Oxford, Merton Street, Oxford, OX1 4JD, United Kingdom.}
\email{raz.slutsky@maths.ox.ac.uk}
\date{}

\begin{document}

\begin{abstract}
We prove a flexible criterion for fixed price one, applying in particular to
higher-rank lattices over local fields and automorphism groups of
affine buildings, including lattices in exotic buildings. It also recovers fixed price one for amenable groups. We then establish a locally compact version, and
deduce that every product of two noncompact, compactly generated, unimodular
locally compact groups, as well as every lattice in such a product, has fixed price one.
\end{abstract}

\maketitle

\section{Introduction}

Cost is a numerical invariant of probability-measure-preserving (pmp) actions, introduced by Levitt \cite{Levitt} and greatly developed by
Gaboriau \cite{GaboriauCost,GaboriauL2}. For a countable group, the cost of an essentially free pmp action measures
the least average number of generators needed to generate the associated
orbit equivalence relation. See \Cref{def:cost} for the formal definition. Cost has found important applications to orbit-equivalence classification, to the study of \(\ell^2\)-Betti numbers, to asymptotic questions in group theory, to operator algebras, and more. See, for example, \cite{AbertNikolov, GaboriauCost,GaboriauL2, GaboriauLyons, ShlyakhtenkoFisher}. A group
has \emph{fixed price} if all of its essentially free pmp actions have the same
cost; it has \emph{fixed price one} if this common value is one.  The fixed price
problem, asking whether every countable group has fixed price, remains one of
the basic open questions in measured group theory.

There is now a parallel theory for locally compact second countable unimodular
groups.  This appears in work of Kyed--Petersen--Vaes
\cite{KyedPetersenVaes} and Carderi \cite{Carderi}, and was  
developed systematically by Ab\'ert and Mellick in
\cite{AbertMellick, MellickPalmGroupoid}.  In particular, they connected cost to the theory of point
processes, and showed that Poisson processes have maximal cost.  This approach has since
been used to prove fixed price one in several higher-rank situations: by
Fr\k{a}czyk--Mellick--Wilkens for lattices in higher rank semisimple real Lie
groups 
\cite{FraczykMellickWilkens} and by Mellick \cite{MellickGaboriauCriterion} in certain products of semisimple algebraic groups.  In the
discrete case, Khezeli
\cite{Khezeli} recently showed that a direct product of discrete groups has fixed price one. See also related work of Bevilacqua and Bowen \cite{BB}.

The purpose of this paper is to establish a flexible criterion for fixed
price one that applies in both the discrete and locally compact settings.
The criterion provides a uniform proof of the cases discussed above and
applies to a broad range of additional groups.

We first prove the following discrete version.

\begin{theorem}\label{thm:discrete}
Let $\Gamma$ be a discrete, finitely generated group, and let $S$ be a finite
symmetric generating set with $e\in S$.  Suppose that there are finite sets
$F_n\subseteq\Gamma$, with $e\in F_n$, such that
\[
        \frac{|F_nSF_n^{-1}|}{|F_n|^2}\longrightarrow 0.
\]
Then $\Gamma$ has fixed price one.
\end{theorem}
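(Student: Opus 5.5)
The plan is to show that the Bernoulli shift $\Gamma\curvearrowright[0,1]^\Gamma$ has cost at most $1+\varepsilon$ for every $\varepsilon>0$. Free actions of infinite groups have cost at least one, and by the Abért--Weiss theorem every essentially free pmp action weakly contains the Bernoulli shift. Hence, by the upper semicontinuity of cost under weak containment, the Bernoulli shift has maximal cost among free actions, so this bound gives fixed price one. ($\Gamma$ is infinite here: if it were finite, then $|F_nSF_n^{-1}|\ge|F_n|$ would force $|F_n|\to\infty$, which is impossible.) Since the ratio $|F_nSF_n^{-1}|/|F_n|^2\ge 1/|F_n|$ tends to $0$, we also get $|F_n|\to\infty$.

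\textbf{Construction.} Fix $K>0$ and a large $n$, write $F=F_n$, and set $p=K/|F|$. From the i.i.d.\ labels, build $\Gamma$-equivariantly a random subset $X\subseteq\Gamma$ in which each element is included independently with probability $p$; the remaining labels serve for tie-breaking. Call $g\in\Gamma$ \emph{covered} if some $x\in X$ satisfies $g\in xF$, i.e.\ $X\cap gF^{-1}\neq\emptyset$. The graphing has three kinds of edges.
\begin{enumerate}
\item Each covered $g\notin X$ is joined to one such $x$, chosen by the tie-breaking labels.
\item Two points $x,y\in X$ are joined whenever $y\in xFSF^{-1}$, that is, whenever $xF$ and $yF$ are joined by an $S$-edge.
\item Each uncovered $g$ is joined to $gs$ for every $s\in S$.
\end{enumerate}
All of these rules are equivariant and measurable, so they define a graphing of the orbit equivalence relation, and its cost is computed via the mass transport principle.

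\textbf{Connectivity.} Every $S$-edge $\{g,gs\}$ lies in one connected component of the graphing. If $g$ or $gs$ is uncovered, the edge itself is included by rule (3). Otherwise, suppose $g\in xF$ and $gs\in yF$ are the assigned points (taking $x=g$ if $g\in X$, which is legitimate because $e\in F$). Then $y\in gsF^{-1}\subseteq xFSF^{-1}$, so $x$ and $y$ are joined by rule (2), and $g,gs$ hang off them by rule (1). Since $S$ generates $\Gamma$, each orbit is connected.

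\textbf{Cost estimate.} Edges of type (1) contribute at most $1-p$ per vertex. Edges of type (2) contribute at most $p\cdot p\,|FSF^{-1}|=K^2|FSF^{-1}|/|F|^2$, which tends to $0$ by hypothesis. Edges of type (3) contribute at most
\[
|S|\,\bbP(g\text{ uncovered})=|S|(1-p)^{|F|}\le|S|e^{-K}.
\]
The total cost is therefore at most $1+|S|e^{-K}+o(1)$. Choosing $K$ large and then $n$ large gives cost at most $1+\varepsilon$.

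The main obstacle is the balancing in this estimate. The density $p$ must be large enough that almost every point is covered, forcing $p|F|\to\infty$ slowly, yet small enough that the "cluster" edges $p^2|FSF^{-1}|$ vanish. The hypothesis gives exactly this window, through the choice $p=K/|F|$. The remaining routine points are measurable tie-breaking and the reduction to the Bernoulli shift, both handled by standard results.
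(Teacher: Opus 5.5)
Your proposal is correct and is essentially the paper's argument in the ``direct version'' given in the remark after its proof. It has the same Bernoulli-selected set, the same three edge types (anchor edges, $FSF^{-1}$-edges between selected points, Cayley edges at uncovered points), the same estimates $p^2|FSF^{-1}|$ and $|S|e^{-p|F|}$, and the same appeal to Ab\'ert--Weiss. The paper's main proof instead builds the graph only on the centres (selected and bad points), passes through Gaboriau's compression formula, and uses the single sequence $p_n=\alpha_n^{-1/4}/|F_n|$ with $\alpha_n=|F_nSF_n^{-1}|/|F_n|^2$ rather than your double limit in $n$ and then $K$; none of these differences is substantive.
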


As an application, the criterion applies to a broad class of higher-rank lattices and to direct products of discrete groups, yielding fixed price one in both settings. It is also satisfied by amenable groups, Baumslag-Solitar groups, $\operatorname{MCG}(S_g)$, and more.

This also gives a direct proof of Gaboriau's weak normality criterion
\cite[Crit\`eres~VI.24(3)]{GaboriauCost}.

\begin{corollary}\label{cor:weak-normality}
Let \(\Gamma\) be a finitely generated group. Suppose that \(\Gamma\)
contains an infinite amenable subgroup \(\Lambda\) such that
\(\Lambda\cap s^{-1}\Lambda s\) is infinite for every \(s\in S\).
Then \(\Gamma\) has fixed price one.
\end{corollary}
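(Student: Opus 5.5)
We will deduce \Cref{cor:weak-normality} from \Cref{thm:discrete}. We construct finite sets \(F_n\subseteq\Lambda\) with \(e\in F_n\) and \(|F_nSF_n^{-1}|/|F_n|^2\to 0\). Here \(S\) is a finite symmetric generating set of \(\Gamma\) containing \(e\). Since \(F_nSF_n^{-1}=\bigcup_{s\in S}F_nsF_n^{-1}\) and \(S\) is finite, it suffices to show that \(|F_nsF_n^{-1}|/|F_n|^2\to 0\) for each fixed \(s\in S\). The hypothesis is only needed for \(s\in S\); for \(s=e\) it holds trivially because \(\Lambda\) is infinite.

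First we rewrite \(F s F^{-1}\). For \(F\subseteq\Lambda\), the set \(FsF^{-1}\) equals \(F s F^{-1} s^{-1}\cdot s\), so \(|FsF^{-1}|=|F\cdot(sFs^{-1})^{-1}|\). Put \(H_s=\Lambda\cap s^{-1}\Lambda s\), which is infinite by assumption. It is amenable, being a subgroup of the amenable group \(\Lambda\). Count the product map \(F\times F\to FsF^{-1}\), \((a,b)\mapsto asb^{-1}\). The fibre over \(asb^{-1}\) consists of the pairs \((a',b')\) with \(a'^{-1}a=s b'^{-1}b s^{-1}\). The left side lies in \(\Lambda\) and the right side lies in \(s\Lambda s^{-1}\), so this common element lies in \(\Lambda\cap s\Lambda s^{-1}=sH_ss^{-1}\). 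Thus
\[
|FsF^{-1}|\le \frac{|F|^2}{\min_{x} \#\text{fibre}(x)},
\]
and it is enough to make every fibre large.

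Next we choose \(F_n\) so that every fibre has size at least \(k\) with \(k\to\infty\). Equivalently, for each \(a,b\in F_n\) there should be at least \(k\) elements \(h\in H_s\) with \(ash\in F_n\) and \(b(s h s^{-1})^{-1}\ldots\) consistent, that is, \(a\,(shs^{-1})\in F_n\) and \(b h\in F_n\) with matching coordinates. Concretely, \(a'=a\,shs^{-1}\) and \(b'=bh\) satisfy \(a'sb'^{-1}=ashs^{-1}sh^{-1}b^{-1}=asb^{-1}\). So we need large sets \(K\subseteq H_s\) such that \(F_n K\subseteq F_n\) and \(F_n(sKs^{-1})\subseteq F_n\) on most of \(F_n\). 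Exact invariance is impossible, so we use Følner sets and discard a small error. Pick finite \(K_s\subseteq H_s\) with \(|K_s|=k\), where \(sK_ss^{-1}\subseteq\Lambda\) because \(H_s\subseteq s^{-1}\Lambda s\). Take \(F_n\) to be Følner sets of \(\Lambda\) that are \(\varepsilon\)-invariant under right multiplication by the finite set \(\bigcup_s (K_s\cup sK_ss^{-1})\). Now count the pairs \((a,b)\in F_n^2\) whose fibre has fewer than \(k\) elements. If the fibre is small, then for some \(h\in K_s\) either \(a\,shs^{-1}\notin F_n\) or \(bh\notin F_n\). The number of such pairs is at most \(2k\varepsilon|F_n|^2\). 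The images of these bad pairs number at most \(2k\varepsilon|F_n|^2\), and the good pairs contribute at most \(|F_n|^2/k\). Hence
\[
|F_nsF_n^{-1}|\le |F_n|^2\left(\tfrac1k+2k\varepsilon\right).
\]
Taking \(k\to\infty\) and \(\varepsilon=k^{-2}\) along a diagonal sequence, and translating so that \(e\in F_n\), gives the criterion of \Cref{thm:discrete}.

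The main obstacle is this bookkeeping: the fibre bound has to be applied only to good pairs, while the bad pairs are counted crudely. We also have to check that the single Følner sequence works simultaneously for all \(s\in S\), which is fine because \(S\) is finite. Infiniteness of \(H_s\) is used exactly to choose \(|K_s|=k\) arbitrarily large.
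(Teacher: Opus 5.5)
Your proof is correct. It uses the same ingredients as the paper but a different counting step.

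The paper also takes $K_s\subseteq\Lambda\cap s^{-1}\Lambda s$ with $|K_s|\ge M$, and a finite $E\subseteq\Lambda$ with $|EK_s^{-1}|,\ |E(sK_s^{-1}s^{-1})|\le 2|E|$. It then applies the Ruzsa triangle inequality $|AC^{-1}|\le |AB^{-1}|\,|BC^{-1}|/|B|$ with $A=Es$, $B=K_s$, $C=E$. This gives $|EsE^{-1}|\le 4|E|^2/M$ immediately, with no exceptional pairs to discard.

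You instead count representations $x=asb^{-1}$ directly. You use that each $h\in H_s$ sends $(a,b)$ to $(a\,shs^{-1},bh)$ without changing the product, so every image of a good pair has a fibre of size at least $k$. This is a F{\o}lner-approximate version of the exact double-coset count $|KsK|=|K|^2/|K\cap sKs^{-1}|$ behind \Cref{cor:finite-subgroups}. It makes clear where the saving comes from: fibres are essentially $H_s$-orbits.

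The price is twofold:
\begin{itemize}
\item you need the good/bad bookkeeping;
\item you need a stronger condition on the F{\o}lner sets, namely near-invariance with $k\varepsilon\to0$.
\end{itemize}
Ruzsa needs only the bounded-growth condition $|EK_s^{-1}|\le 2|E|$, which your near-invariance implies but which does not imply it. Amenability of $\Lambda$ supplies either condition, so both arguments prove the corollary. The paper's version is shorter and reuses the inequality from \Cref{cor:amenable-groups}; yours is self-contained.
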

Here we require a weaker form of normality, only with respect to $S$, known as $q$-normality
\cite[Section~5.1]{PetersonThom}.
See also \Cref{cor:finite-subgroups} for a version for finite subgroups.
This proves, for example, that Thompson's group $V$ and other topological full groups have fixed price one. 

The locally compact version is as follows.  

\begin{theorem}\label{thm:lc-criterion}
Let $G$ be a noncompact, compactly generated, unimodular locally compact
second countable group, let $\lambda$ be a left Haar measure, and let
$e \in S = S^{-1}\subseteq G$ be a compact  generating set.
Suppose that there are compact Borel sets $F_n\ni e$ 
such that
\[
        \frac{\lambda(F_nSF_n^{-1})}{\lambda(F_n)^2}\longrightarrow 0.
\]
Then $G$ has fixed price one and so does every lattice in $G$.
\end{theorem}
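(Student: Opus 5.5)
We follow the Abért--Mellick framework. For a unimodular lcsc group $G$, the cost of a free pmp action, or of an invariant point process, is defined via its Palm equivalence relation (cross-section). Poisson point processes have maximal cost among free actions, and cost is computed by factor graphs. It therefore suffices to show that the Poisson point process $\Pi$ of some intensity $t$ on $G$ has cost at most $1+\varepsilon$ for every $\varepsilon>0$. The lattice statement then follows from the standard induction argument: for a lattice $\Gamma<G$ the cost satisfies $\cost(\Gamma)-1=\covol(\Gamma)(\cost(G)-1)$, and fixed price transfers between $G$ and $\Gamma$ through the induced action. So the main task is the Poisson estimate. The plan is to mimic the proof of \Cref{thm:discrete}, replacing counting by Haar measure.

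\textbf{Step 1 (connect the process cheaply).} Fix $n$ and write $F=F_n$. We use the compact generating set $S$: the graph on $\Pi$ joining $x,y$ when $x^{-1}y\in$ a large compact power of $S$ is connected, at least after thickening with a Poisson process of high intensity. We want a factor graph whose expected degree per point is $2+o(1)$. Following the discrete proof, build a ``clumping'' hierarchy. Each point $x\in\Pi$ sees the finite set $\Pi\cap xF$. Pair points $x,y$ whose translates $xF$ and $yF$ ``overlap through $S$'', meaning $y\in xFSF^{-1}$. For Poisson intensity $t$, the expected number of such partners is $t\lambda(FSF^{-1})$. Meanwhile each clump $\Pi\cap xF$ has about $t\lambda(F)$ points. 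Choose $t=t_n$ with $t\lambda(F)\to\infty$ but $t\lambda(FSF^{-1})/(t\lambda(F))^2\cdot t\lambda(F)$ small, i.e.\ per point the edges needed to connect neighbouring clumps are $O(t\lambda(FSF^{-1})/(t\lambda(F))^2)\to 0$ relative to the clump size. Concretely, inside each clump we use a spanning tree, which costs at most one edge per point. Between two adjacent clumps we spend one edge, and the number of adjacent pairs per point is controlled by the ratio hypothesis. Since cost is independent of intensity for the Poisson process, rescaling is harmless.

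\textbf{Step 2 (choosing clumps as a factor).} The clumps must be chosen equivariantly, and each point should lie in boundedly many clumps on average. We select a factor of ``centres'' via a thinning, for instance a maximal $F^{-1}F$-separated subset chosen with i.i.d.\ labels. We then assign each point to a centre $c$ with the point in $cF$; uncovered points are attached using edges through $S$. Here the assumption $e\in F$ and the compactness are used. We then verify via the mass transport principle (unimodularity) that the expected degree is at most $2+C\,\lambda(F_nSF_n^{-1})/\lambda(F_n)^2\cdot(\text{const})+o(1)$.

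\textbf{Main obstacle.} The hardest step is the cover by centres. In the discrete case, coverage and overlap multiplicities are counted exactly. In the locally compact setting, a separated net of centres need not cover with multiplicity bounded purely by $\lambda(FSF^{-1})/\lambda(F)$, and points not covered must still be linked cheaply. I would first try to avoid nets altogether. Instead, take an independent Poisson process of centres with intensity $s$ chosen so that $s\lambda(F)\approx K$ is large. Then almost every point is covered, and the uncovered fraction $e^{-K}$ costs at most a constant per point. The number of pairs of centres $c,c'$ with $c'\in cFSF^{-1}$ is then $s\cdot s\lambda(FSF^{-1})$ per unit volume, against $t\cdot$volume points. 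Taking $t\gg s$ makes both error terms vanish. This gives cost $\le 1+\varepsilon$, and non-compactness guarantees the infinite-orbit setting needed for the lower bound of $1$.
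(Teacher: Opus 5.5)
There is a genuine gap in connectivity. Your Step 1 relies on the claim that the graph on $\Pi$ joining $x,y$ when $x^{-1}y$ lies in a fixed compact set $Q$ (such as $S^k$) is connected, perhaps after superposing a denser Poisson process. This is false. A superposition of independent Poisson processes is again Poisson. By the Mecke formula, the points $x$ with $\Pi\cap x(Q\cup Q^{-1})=\{x\}$ have intensity $te^{-t\lambda(Q\cup Q^{-1})}>0$, so for every $t$ there are almost surely isolated vertices.

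The same defect breaks your final construction. With positive probability, a centre $c$ has no other centre in $cFQF^{-1}$, and $\Pi$ has no points in $cFQ\setminus cF$. In that event nothing in your rules joins the clump of $c$ to the rest. So the graph is disconnected (in fact almost surely), it does not generate the relation, and no cost bound follows.

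What you are missing is a continuum replacement for the Cayley graph. This is a factor $(\calX,\calH)$ of the Poisson process with $\calH$ connected, $x^{-1}x'\in S$ along edges, and finite edge intensity $h$; see \Cref{prop:scaffold}. It is built from the Ab\'ert--Mellick factor Delone set, which is uniformly separated and coarsely dense, so a fixed-radius graph on it is connected with bounded degree; one then subdivides each edge into $S$-steps. Badness must be defined on these scaffold vertices, namely $\calY\cap xF^{-1}=\varnothing$ for the independent centre process $\calY$, and not on $\Pi$. Mass transport then bounds the edges with a bad endpoint by $2he^{-K}$, where $K=s\lambda(F)$.

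Your accounting also uses the wrong normalization. In \eqref{eq:cost}, $\cost-1=\eint(\mathcal G)-\inte(\Pi)$ is a per-unit-volume quantity, and it is this quantity that is independent of the Poisson intensity. The per-point (Palm) cost of an intensity-$t$ Poisson process is $1+(c_{\mathrm P}(G)-1)/t$, which tends to $1$ for every group of finite cost. So ``expected degree $2+o(1)$ per point'' together with ``$t\gg s$'' proves nothing. As written, your final estimate never uses $\alpha_S(F_n)\to0$, so it would apply equally to $\mathrm{PSL}_2(\mathbb R)$, whose free lattices force Poisson cost above $1$.

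What you need is for two absolute quantities to vanish. The first is the centre--centre edge intensity $\tfrac12 s^2\lambda(FSF^{-1})=\tfrac12K^2\alpha_S(F)$; this is where the hypothesis enters, with $K\to\infty$ slowly. The second is the edge intensity of the uncovered part. That intensity is $O(e^{-K})$ only if the uncovered part lives on a process of fixed intensity (the scaffold), not on $\Pi$ with $t\to\infty$.

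Your intra-clump spanning trees are harmless in the correct accounting, since they contribute at most zero to $\eint-\inte$, but they are unnecessary. The paper instead takes the anchors themselves as the vertex set: the blue centres together with the bad scaffold vertices. These are marked by label trickery so that the marked process is isomorphic to the ambient Poisson process. This yields $c_{\mathrm P}(G)-1\le\eint(\calN)\le\tfrac12K^2\alpha_S(F)+2he^{-K}$.
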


The criterion has the following metric consequence.

\begin{corollary}\label{cor:lc-metric}
Let $G$ admit a proper left-invariant metric with
\(
        D=\max_{s\in S}d(e,s)
\) and such that
\[
        \liminf_{R\to\infty}\frac{\lambda(B(2R+D))}{\lambda(B(R))^2}=0,
\]
then \(G\) has fixed price one, and so does every lattice in \(G\).
\end{corollary}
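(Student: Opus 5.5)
The plan is to reduce \Cref{cor:lc-metric} to \Cref{thm:lc-criterion} by taking $F_R = B(R)$, the closed ball of radius $R$ about $e$. Here $G$ is as in \Cref{thm:lc-criterion}: noncompact, compactly generated, unimodular, with compact symmetric generating set $S\ni e$. Since the metric is proper, closed balls are compact. They are Borel, being closed, and they contain $e$. So each $B(R)$ is an admissible choice of $F_n$, and it remains to check the ratio condition along a suitable sequence $R_n\to\infty$.

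The first step is the containment $B(R)\,S\,B(R)^{-1}\subseteq B(2R+D)$. Take $x,y\in B(R)$ and $s\in S$. By left-invariance, $d(e,xsy^{-1}) \le d(e,x)+d(x,xs)+d(xs,xsy^{-1}) = d(e,x)+d(e,s)+d(e,y^{-1})$. Left-invariance also gives $d(e,y^{-1})=d(y,e)\le R$. Hence $d(e,xsy^{-1})\le R+D+R$. Monotonicity of $\lambda$ then yields
\[
\frac{\lambda(B(R)SB(R)^{-1})}{\lambda(B(R))^2}\le \frac{\lambda(B(2R+D))}{\lambda(B(R))^2}.
\]
Note that $D<\infty$ because $S$ is compact and $d(e,\cdot)$ is continuous. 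Continuity holds because a proper left-invariant metric is assumed compatible with the topology, as is standard.

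The second step is to choose the sequence. By the liminf hypothesis there exist $R_n\to\infty$ along which the right-hand side tends to $0$. Set $F_n=B(R_n)$. Then $\lambda(F_n)>0$, since $B(R_n)$ contains an open ball around $e$ and Haar measure charges nonempty open sets. The ratio in \Cref{thm:lc-criterion} therefore tends to $0$. Applying \Cref{thm:lc-criterion} gives fixed price one for $G$ and for every lattice in $G$.

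There is no real obstacle; the only point needing care is the measurability and compactness of the balls and the identity $d(e,y^{-1})=d(e,y)$. Both follow directly from properness and left-invariance.
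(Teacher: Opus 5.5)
Your proposal is correct and is essentially the paper's own argument. Both use the containment \(B(R)SB(R)^{-1}\subseteq B(2R+D)\), which follows from left-invariance and the triangle inequality, and then apply \Cref{thm:lc-criterion} with \(F_n=B(R_n)\) along a sequence \(R_n\to\infty\) realizing the liminf. You also check the routine points the paper leaves implicit: compactness of the balls, \(e\in B(R)\), \(\lambda(B(R))>0\), and \(D<\infty\).
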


For instance, the hypothesis holds whenever
\(
        \lambda(B(R))\asymp e^{hR}R^a
        \)
       with $a>0$, and by the \v{S}varc--Milnor lemma, this also applies to any group acting co-compactly and properly discontinuously on spaces with such volume growth. 

\begin{corollary}\label{cor:local-field-semisimple} Let \(k\) be a local field and let \(\mathbf G\) be a connected semisimple \(k\)-group with \( \operatorname{rank}_{k}\mathbf G \geq 2\) Then \(G = \mathbf{G}(k)\) has fixed price one, and so does every lattice in \(G\). \end{corollary}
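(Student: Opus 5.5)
The plan is to deduce \Cref{cor:local-field-semisimple} from \Cref{cor:lc-metric}, which requires computing the Haar volume growth of word-metric balls in \(G=\mathbf G(k)\). By that corollary, it is enough to produce a proper left-invariant metric \(d\) on \(G\) with
\[
\lambda(B(R))\asymp e^{hR}R^{a},\qquad a>0.
\]
Indeed, this gives \(\lambda(B(2R+D))/\lambda(B(R))^2\asymp e^{hD}(2R+D)^a R^{-2a}\to 0\). Before starting, two reductions. Passing to the simply connected cover or to the adjoint quotient changes \(G\) only by a compact kernel or a finite-index, cocompact piece, and the relevant statements transfer along such maps. If \(\mathbf G\) has anisotropic factors, these contribute compact factors, which can be absorbed into the metric. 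So we may assume \(\mathbf G\) is isotropic, with \(\operatorname{rank}_k\mathbf G=r\ge 2\).

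For the volume computation, fix a maximal compact subgroup \(K\) and a maximal \(k\)-split torus, and use the Cartan decomposition \(G=KA^+K\). In the non-archimedean case \(A^+\) is replaced by the cone of dominant coweights \(\Lambda^+\subset\mathbb Z^r\); in the archimedean case it is a Weyl chamber in \(\mathbb R^r\). Take the metric to be the word (or Riemannian) metric, which is quasi-isometric to \(d(k a k',e)=\|a\|\) for a Weyl-invariant norm. Two standard facts are then used:
\[
\lambda(K\varpi K)\asymp q^{\langle 2\rho,\varpi\rangle}\quad\text{(non-archimedean, via the Macdonald formula)},
\]
and \(\prod\sinh(\alpha(H))\) as the density on \(A^+\) in the archimedean case. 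Summing or integrating over \(\{\|\varpi\|\le R\}\cap\Lambda^+\), the exponent \(\langle 2\rho,\cdot\rangle\) is maximised on the ball. If the norm is chosen so that its dual ball meets the line \(\mathbb R\rho\) in a single point (for instance the Euclidean norm), then the maximum is attained along the ray through \(\rho^\vee\) direction. A Laplace-type estimate then gives
\[
\lambda(B(R))\asymp e^{hR}R^{(r-1)/2}.
\]
Hence \(a=(r-1)/2>0\) exactly because \(r\ge 2\).

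The main obstacle is that \Cref{cor:lc-metric} needs a genuine metric and exact asymptotics, whereas quasi-isometries only control volume up to shifts of the radius. The condition in that corollary is not QI-invariant, since a multiplicative constant in \(R\) would spoil it. So I would work directly with the left-invariant pseudo-metric \(d(g,h)=\|a(g^{-1}h)\|\), where \(a(\cdot)\) denotes the Cartan projection. It is a metric, in fact a genuine one when \(K\) is allowed to absorb it, thanks to the triangle inequality for Cartan projections (Kostant convexity / the building distance). Properness is immediate, and \(D\) is finite because \(S\) is compact.

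As a fallback, I would instead verify \Cref{thm:lc-criterion} directly with \(F_n=K\varpi_nK\) for \(\varpi_n=n\rho^\vee\). Then \(F_nSF_n^{-1}\subseteq\bigcup_{\|\mu\|\le 2n\|\rho^\vee\|+C}K\mu K\). The ratio decays because \(\lambda(F_n)^2=q^{2\langle 2\rho,\varpi_n\rangle}\), while the union has at most polynomially many double cosets, each of volume at most \(q^{2\langle2\rho,\varpi_n\rangle+O(1)}\). The decay comes from the strict inequality \(\langle 2\rho,\mu\rangle<\langle 2\rho,2\varpi_n\rangle\) for most \(\mu\), and this must be made quantitative. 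That quantitative step is where I expect the real work, and it is also where \(r\ge2\) enters through the polynomial factor.
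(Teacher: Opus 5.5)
Your main route is correct, but it uses a different gauge from the paper, and the paper's choice makes its computation much simpler. The paper takes $F_R=\{g\in G: L(g)\le R\}$ with $L(g)=\langle 2\rho,\mu(g)\rangle$. This is the polyhedral $W$-invariant gauge whose level sets in $\mathfrak a^+$ coincide with those of the volume density $e^{\langle 2\rho,\cdot\rangle}$. So $\lambda(F_R)\asymp\int_0^R e^{s}s^{r-1}\,ds\asymp e^RR^{r-1}$ follows simply by measuring (or counting lattice points in) the $(r-1)$-simplices $\{H\in\mathfrak a^+:\langle 2\rho,H\rangle=s\}$. Subadditivity of $L$ and $L(g^{-1})=L(g)$ (from $-w_0\rho=\rho$) then give $F_RSF_R^{-1}\subseteq F_{2R+D}$ and a ratio $\ll R^{1-r}$.

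Your Euclidean (symmetric-space or CAT(0) building) distance has the advantage of being the standard metric. However, its polynomial factor $R^{(r-1)/2}$ comes only from the curvature of the sphere at the maximiser. So you genuinely need a Laplace estimate, and in the non-Archimedean case a lattice-point count in slices near the maximiser: discs of radius $\asymp\sqrt R$ at distance $\asymp R$ from the walls. The decay you get is only $R^{-(r-1)/2}$.

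Three smaller points on the main route:
\begin{itemize}
\item The condition you state, essentially uniqueness of the maximiser, is not the relevant one. A $W$-invariant norm whose unit ball has a vertex in the direction of $\rho$ has a unique maximiser but gives $\lambda(B(R))\asymp e^{hR}$, i.e.\ $a=0$. What you actually use is the curvature of the Euclidean sphere.
\item The maximiser is the vector dual to $\rho$ under the inner product, which is proportional to $\rho^\vee$ only in the simply-laced case. This is harmless, since it still lies inside the chamber.
\item $d(g,h)=\|\mu(g^{-1}h)\|$ is only a pseudo-metric. That suffices, because the proof of \Cref{cor:lc-metric} uses only compactness of balls and $B(R)SB(R)^{-1}\subseteq B(2R+D)$.
\end{itemize}
Your isogeny and anisotropic reductions are unnecessary: the Cartan decomposition and the volume formulas hold for $\mathbf G(k)$ directly. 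Moreover, in positive characteristic an isogeny need not have finite-index image, e.g.\ $\mathrm{SL}_p\to\mathrm{PGL}_p$ over $\mathbb F_q((t))$ with $p=\operatorname{char}k$.

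Your fallback, however, fails and should be dropped. Take any dominant $\varpi$ and $F=K\varpi K$. Using a representative $\dot w_0\in K$ of the longest Weyl element, one gets $FF^{-1}\supseteq K(\varpi-w_0\varpi)K$. Since $e\in S$ and $-w_0\rho=\rho$,
\[
 \lambda\bigl(FSF^{-1}\bigr)\ \ge\ \lambda\bigl(K(\varpi-w_0\varpi)K\bigr)\ \asymp\ q^{2\langle 2\rho,\varpi\rangle}\ \asymp\ \lambda(F)^2 .
\]
So $\alpha_S(K\varpi_nK)$ stays bounded away from $0$. Translating to make $e\in F$ does not change $\alpha_S$, by unimodularity.

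A single double coset behaves like a sphere in a tree. The top-level double coset is always reached in $FSF^{-1}$, and the polynomially many lower ones sit in the numerator, where they can only hurt. The decay in the criterion comes from the opposite mechanism. $F$ itself must contain polynomially many double cosets at (nearly) top level. Then the polynomial factor is squared in $\lambda(F)^2$ but appears only once in $\lambda(FSF^{-1})$, and this is exactly where $r\ge 2$ is used.
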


Thus the result covers both Archimedean and non-Archimedean higher-rank
groups, including those over local fields of positive characteristic. We also get the following for higher-dimensional affine buildings, including exotic buildings.

\begin{corollary}\label{cor:affine-buildings}
Let \(X\) be a locally finite thick regular affine building of Euclidean rank
\(r\ge2\), and let \(G<\operatorname{Aut}(X)\) be a closed unimodular subgroup
acting cocompactly on \(X\). Then \(G\) has fixed price one, and so does every
lattice in \(G\). 
\end{corollary}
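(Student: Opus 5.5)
Since \(G\) acts properly and cocompactly on \(X\), the plan is to reduce to \Cref{cor:lc-metric}. Because \(X\) is locally finite, \(G\) is locally compact, second countable and totally disconnected. Cocompactness together with properness makes \(G\) compactly generated. It is noncompact because \(X\) is unbounded.

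The first step is to fix a vertex \(o\in X\) and equip \(G\) with the pseudo-metric \(d(g,h)=d_X(go,ho)\), using the CAT(0) metric, or the combinatorial metric on vertices. This is left-invariant and proper, and after adding a small bounded term separating points of the compact stabiliser \(G_o\) it becomes a genuine proper left-invariant metric. That correction changes distances only by a bounded amount, and the condition in \Cref{cor:lc-metric} is insensitive to bounded changes in \(D\) and \(R\). Normalise \(\lambda(G_o)=1\). Then \(\lambda(B(R))\) is comparable, up to constants coming from the finitely many \(G\)-orbits of vertices, to the number of vertices of \(X\) in the ball of radius \(R\) about \(o\).

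The main step, and the expected obstacle, is to show that
\[
\#\{v : d(o,v)\le R\}\asymp e^{hR}R^{a}
\]
for some \(h>0\) and \(a=r-1>0\). By \Cref{cor:lc-metric} and the subsequent remark, this gives \(\lambda(B(2R+D))/\lambda(B(R))^2\to0\). To count, I would use the retraction onto an apartment centred at a chamber containing \(o\), or the Cartan decomposition relative to a special vertex (a special vertex exists in every affine building). Vertices at vector distance \(\mu\) in the closed Weyl chamber \(\mathfrak a^+\) from \(o\) number about \(q^{\langle 2\rho,\mu\rangle}\) up to bounded multiplicative constants, where the thickness parameters \(q\) come from regularity. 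This holds also for exotic buildings, since the count follows from the local combinatorics: the number of galleries of a given type in a regular building. Summing over the lattice points \(\mu\in\mathfrak a^+\) with \(|\mu|\le R\) is a lattice-point count weighted by the linear functional \(2\rho\).

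The exponential rate is \(h=\max_{|\mu|\le1}\langle2\rho,\mu\rangle\cdot\log q\), attained in the direction of \(\rho\). Since \(r\ge2\), that direction is interior to \(\mathfrak a^+\). The near-maximisers form a spherical cap of dimension \(r-1\) around it. A Laplace-type estimate therefore gives a polynomial factor: \(R^{(r-1)/2}\) for a Euclidean norm, or \(R^{r-1}\) for a polyhedral norm where the maximum is attained along a face. In both cases the exponent is positive, and only the qualitative bound \(\lambda(B(R))\gtrsim e^{hR}R^{a}\) with \(a>0\) is needed, together with the upper bound \(\lambda(B(2R+D))\lesssim e^{2hR}\mathrm{poly}(R)\) with a smaller polynomial. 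More precisely, one needs
\[
\lambda(B(2R+D))/\lambda(B(R))^2\lesssim R^{a}/R^{2a}\to0,
\]
so the upper and lower bounds must share the same exponent \(a\), and the Laplace asymptotic with matching constants handles this. If exact asymptotics are awkward, I would instead take \(F_n\) to be the union of double cosets \(G_oa_\mu G_o\) over \(\mu\) in a box in \(\mathfrak a^+\) and verify \Cref{thm:lc-criterion} directly. Then \(F_nSF_n^{-1}\) is contained in the double cosets with \(\mu\) in a box of twice the size plus a bounded error. Its measure is roughly \(q^{2\langle2\rho,\cdot\rangle}\) times a number of boundary terms, which is \(o\) of the square of \(\lambda(F_n)\) because the box has \(r\ge2\) free directions.

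The lattice statement then follows from \Cref{thm:lc-criterion}, respectively \Cref{cor:lc-metric}.
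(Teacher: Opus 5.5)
Your main line, using the CAT(0) metric, works. It is a different route from the paper's, though, and two of the alternatives you offer along the way fail.

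\textbf{Comparison with the paper.} The paper does not use the CAT(0) metric. It takes the \(\operatorname{Aut}(X)\)-invariant chamber metric \(d_q(a,b)=L(\delta(a,b))\), with \(L(w)=\log q_w\). For this metric the number of chambers at Weyl distance \(w\) from \(c\) is exactly \(q_w=e^{L(w)}\). Since \(L\) is a polyhedral norm on the translation lattice up to bounded error, the shell \(T-1<L\le T\) contains \(\asymp T^{r-1}\) Weyl distances, each carrying \(\asymp e^T\) chambers. This gives \(\lambda(F_T)\asymp e^TT^{r-1}\) and a ratio \(\ll T^{1-r}\). No special vertex, no Macdonald-type vertex count and no Laplace analysis are needed.

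Your route needs two further inputs.
\begin{itemize}
\item Vertices at vector distance \(\mu\) must number \(\asymp e^{\ell(\mu)}\) with \(\ell\) linear on \(\mathfrak a^+\). This does follow from the gallery counts: the Weyl distances from a chamber at \(o\) to chambers at such a vertex form a bounded set on which \(\log q_w=\ell(\mu)+O(1)\).
\item You then need the Laplace estimate on Euclidean balls.
\end{itemize}
That estimate is correct and gives \(\lambda(B(R))\asymp e^{hR}R^{(r-1)/2}\), not \(R^{r-1}\) as in your first display. The ratio is therefore \(\asymp R^{-(r-1)/2}\to0\), so you have a valid proof with a weaker rate. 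The correction turning \(d\) into a genuine metric is unnecessary: \Cref{thm:lc-criterion} applies directly to \(\{g:d_X(o,go)\le R\}\).

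\textbf{The combinatorial vertex metric fails.} For a polyhedral metric, the power of \(R\) is the dimension of the face of the unit ball on which \(\ell\) is maximised. For the vertex graph metric that face is often a single point.

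Take the \(\widetilde C_2\) building of \(\mathrm{Sp}_4(k)\), with walls \(x\pm y\in\mathbb Z\) and \(2x,2y\in\mathbb Z\). Vertices of the type of \(o\) at dominant vector distance \((x,y)\in\mathbb Z^2\), \(x\ge y\ge0\), number \(\asymp q^{4x+2y}\). Their graph distance from \(o\) is \(2x\). So \(\ell\) is maximised on a ball only at the corner \(x=y\), and \(|B(\rho)|\asymp q^{3\rho}\) with no polynomial factor. Hence \(\lambda(B(2R+D))/\lambda(B(R))^2\) stays bounded away from \(0\). The same happens in \(\widetilde A_3\), where \(\ell\propto 3m_1+4m_2+3m_3\) while the distance is \(m_1+m_2+m_3\). \(\widetilde C_2\) is precisely the type of the exotic buildings the corollary targets.

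\textbf{The box fallback fails for the same reason.} The sum of \(e^{\ell}\) over a box is dominated by its top corner \(\lambda_n\), so \(\lambda(F_n)\asymp e^{\ell(\lambda_n)}\). Take \(G=\mathrm{Sp}_4(k)\) and \(K=G_o\), where \(-w_0=1\). Then
\(F_nSF_n^{-1}\supseteq(Ka_{\lambda_n}K)^2\supseteq Ka_{2\lambda_n}K\),
which has measure \(\asymp e^{2\ell(\lambda_n)}\asymp\lambda(F_n)^2\). So the ratio is bounded below.

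A gain requires \(F\) to contain an unboundedly growing number of Weyl-distance classes with near-maximal \(\ell\). The paper's \(\ell\)-sublevel sets contain \(\asymp T^{r-1}\) such classes and Euclidean balls \(\asymp R^{(r-1)/2}\). Boxes, and graph-metric balls in these examples, contain only \(O(1)\). By contrast, the gallery metric on chambers is proportional to \(d_q\) when all thickness parameters are equal.
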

In particular, this gives fixed price one for the recently constructed
simple, non-residually finite uniform lattices on exotic buildings of
type \(\widetilde C_2\)
\cite{TitzMiteWitzel,LecureuxWitzel}. We note that when the affine Coxeter system is \(2\)-spherical, regularity is
automatic \cite[Section~1.5]{AbramenkoParkinsonVanMaldeghem}. In particular, this applies to every irreducible affine type of Euclidean rank at least two. As an additional application, we get:

\begin{corollary}\label{cor:lc-products}
Let $G_1$ and $G_2$ be noncompact, compactly generated, unimodular lcsc groups.
Then $G_1\times G_2$ has fixed price one and so does every lattice in such a product.
\end{corollary}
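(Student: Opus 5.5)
We deduce \Cref{cor:lc-products} from \Cref{thm:lc-criterion}, applied to $G=G_1\times G_2$ with Haar measure $\lambda=\lambda_1\otimes\lambda_2$. The group $G$ is noncompact, unimodular, lcsc and compactly generated. Fix compact symmetric generating sets $e\in S_i\subseteq G_i$ and put $S=S_1\times S_2$, which is a compact symmetric generating set of $G$ containing $e$. It then suffices to produce compact Borel sets $F_n\ni e$ in $G$ with $\lambda(F_nSF_n^{-1})/\lambda(F_n)^2\to 0$; the conclusion for lattices is part of \Cref{thm:lc-criterion}.

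We take the $F_n$ to be ``crosses'' rather than boxes. Since $G_i$ is noncompact and compactly generated, the balls $B_i(R)=S_i^R$ (in the word metric) are compact and satisfy $\lambda_i(B_i(R))\to\infty$. Choose $R_n\to\infty$, set $A_n=B_1(R_n)$ and $C_n=B_2(R_n)$, and define
\[
F_n=(A_n\times\{e\})\cup(\{e\}\times C_n).
\]
This set is $\lambda$-null, so we instead use a thickened cross
\[
F_n=(A_n\times S_2)\cup(S_1\times C_n),
\]
whose measure is at least $\max\{\lambda_1(A_n)\lambda_2(S_2),\ \lambda_1(S_1)\lambda_2(C_n)\}$. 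To estimate $F_nSF_n^{-1}$, we expand it as a union of four products of ``pieces.'' The key observation is that in each coordinate at most one of the two factors of $F_n$ contributes a large ball. The piece coming from $A_n\times S_2$ and $(A_n\times S_2)^{-1}$ is contained in $A_nS_1A_n^{-1}\times S_2^3$. Its measure is at most $\lambda_1(B_1(2R_n+1))\,\lambda_2(S_2^3)$, and this may be much larger than $\lambda_1(A_n)$. So crosses built from balls do not obviously work, and the mixed pieces are the important ones: the mixed term is contained in $A_nS_1^2\times S_2^2C_n^{-1}$, of measure roughly $\lambda_1(A_n)\lambda_2(C_n)$.

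We therefore balance the sizes and allow the same-coordinate terms to be large but controlled. We want $\lambda_1(A_nS_1A_n^{-1})\ll\lambda_1(A_n)^2$ or similar. This fails in general (for example for free groups, $|B(2R)|\asymp|B(R)|^2$), so we modify the choice: we take $A_n$ to be a thickened compact subset that does not grow in the first coordinate. Concretely, let $F_n=(K_1\times C_n)\cup(A_n\times K_2)$, where $K_i$ are fixed compact sets and we choose $C_n=K_2^{\,}\!\cdot$ (large set) so that the same-coordinate products collapse. The cleanest version is to note that
\[
(\{e\}\times C)(S)(\{e\}\times C)^{-1}\subseteq S_1\times CS_2C^{-1},
\]
which is again quadratic. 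So instead we want $F_n$ whose square-type products factor as products. Take $F_n=A_n\times C_n$ with $A_n,C_n$ compact. Then
\[
F_nSF_n^{-1}=A_nS_1A_n^{-1}\times C_nS_2C_n^{-1},
\]
and the ratio becomes a product of the two one-factor ratios. This is no better, since each ratio is at least of order one. Hence the cross is genuinely needed, and the main obstacle is controlling $A_nS_1A_n^{-1}$.

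The resolution we propose is to let the cross arms be asymmetric in direction. Set
\[
F_n=(A_n\times\{e\})\,(\{e\}\times C_n)\cup\dots,
\]
or, more simply, choose $F_n=\{(a,c):a\in A_n,\,c\in C_n\}$ with the multiplication order exploited via $F_n=(A_n\times S_2)(S_1\times C_n)$. Then $F_nSF_n^{-1}$ involves $A_n S_1 A_n^{-1}$ only through elements paired with bounded second coordinate. Precisely, we choose $F_n$ so that $F_nSF_n^{-1}\subseteq (A_nS_1^kA_n^{-1}\times S_2^k)\cup(S_1^k\times C_nS_2^kC_n^{-1})\cup(A_nS_1^k\times S_2^kC_n^{-1})\cup\cdots$ and pick $A_n,C_n$ so that $\lambda_1(A_nS_1^kA_n^{-1})\le M_n$ and $\lambda_1(A_n)$ is large: for instance, take $A_n$ to be an increasing union of $N_n$ translates $g_jS_1$ chosen far apart, so that $\lambda_1(A_n)\asymp N_n$ while $A_nS_1^kA_n^{-1}$ has measure at most $N_n^2\lambda_1(S_1^{2k+1})$. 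Doing the same with $C_n$ gives $\lambda(F_n)\asymp N_n^2$, while the bad terms are $O(N_n^2)$, so the ratio is $O(N_n^{-2})\to0$. The final check is verifying this bookkeeping with $F_n$ the union of the two thickened arms through $e$; this is the step we expect to need the most care.
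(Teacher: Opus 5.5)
Your reduction to \Cref{thm:lc-criterion} with $S=S_1\times S_2$ matches the paper. The gap is in the construction of $F_n$, which is the whole content of the corollary: your construction does not work, and the final count combines numbers from two different sets.

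\begin{itemize}
\item If $F_n=(A_n\times S_2)\cup(S_1\times C_n)$ is the thickened cross, then $\lambda(F_n)\le\lambda_1(A_n)\lambda_2(S_2)+\lambda_1(S_1)\lambda_2(C_n)=O(N_n)$, not $\asymp N_n^2$. Since $e\in S_1\cap S_2$, the mixed term alone gives $F_nSF_n^{-1}\supseteq A_n\times C_n^{-1}$, of measure $\lambda_1(A_n)\lambda_2(C_n)\asymp N_n^2$. So the ratio stays bounded below by a positive constant.
\item If instead you mean $F_n=(A_n\times S_2)(S_1\times C_n)=A_nS_1\times S_2C_n$, this is a box of measure $\asymp N_n^2$. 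But then $F_nSF_n^{-1}$ is a product of two sets, each of measure $O(N_n^2)$ by your own bound. The numerator is therefore $O(N_n^4)$, and nothing decays.
\end{itemize}

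Your claimed $O(N_n^{-2})$ divides the cross's numerator by the square of the box's measure. Scattering translates far apart is also the wrong move, since it makes $A_nS_1A_n^{-1}$ as large as possible.

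No other choice of arms rescues these two shapes in general. Take free factors, where $\lambda_i(AS_iA^{-1})/\lambda_i(A)^2$ is bounded below uniformly in $A$; otherwise \Cref{thm:discrete} would give a free group fixed price one. For a box, the ratio is the product of these two one-factor ratios. For a cross, the terms $AS_1A^{-1}\times S_2^3$ and $S_1^3\times CS_2C^{-1}$ alone already have measure $\gtrsim\lambda(F)^2$.

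The missing idea is to interpolate between the box and the cross using all scales at once. The paper normalises ball volumes into gauges $w_i$ on $G_i$; up to normalisation, $w_i(x)$ is the Haar measure of the smallest power of the generating set containing $x$. These gauges satisfy $\lambda_i\{w_i\le t\}\asymp t$. By submultiplicativity of ball volumes they also satisfy $w_i(xky^{-1})\le C_i\,w_i(x)w_i(y)$ for $k\in S_i$. The paper then takes the hyperbolic region
\[
F_T=\{(x_1,x_2):w_1(x_1)w_2(x_2)\le T\}.
\]
The multiplicative property gives $F_TSF_T^{-1}\subseteq F_{CT^2}$. Fubini together with $\lambda_i\{w_i\le t\}\asymp t$ gives $\lambda(F_T)\asymp T\log T$. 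Hence the ratio is $\ll T^2\log T/(T\log T)^2=1/\log T\to0$.

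The gain is only logarithmic. It comes from the $\asymp\log T$ intermediate boxes of shape $2^j\times T2^{-j}$ (in volume coordinates) that make up $F_T$. A single box captures only one of these scales, and your cross captures only the two extreme ones, so neither can produce the decay. The sets also need to be sublevel sets of such multiplicative gauges, i.e.\ balls rather than scattered translates. That is what keeps $F_TSF_T^{-1}$ inside a set of the same shape and controlled size.
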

This
answers a question of Ab\'ert--Mellick
\cite[Question~2]{AbertMellick}, where the case
\(G\times\mathbb Z\) was established, and extends
\cite{MellickGaboriauCriterion}, which treats certain products of semisimple
algebraic groups.

We record, by \cite[Corollaire~3.23]{GaboriauL2} and \cite{AbertNikolov, AbertToth}, the following:

\begin{corollary}\label{cor:rank-gradient-l2}
Let $\Gamma$ be a finitely generated discrete group which is shown by one of
the preceding criteria to have fixed price one.  Then
\(
        \beta_1^{(2)}(\Gamma)=0.
\)
Moreover, for every Farber sequence $(\Gamma_n)$ of finite-index subgroups of
$\Gamma$,
\[
        \operatorname{RG}(\Gamma,(\Gamma_n))
        :=
        \lim_{n\to\infty}
        \frac{d(\Gamma_n)-1}{[\Gamma:\Gamma_n]}
        =0,
\]
where $d(\Lambda)$ denotes the minimal number of generators of $\Lambda$.
\end{corollary}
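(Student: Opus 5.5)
The plan is to combine fixed price one with the two cited results. Let $\Gamma$ be finitely generated with fixed price one, obtained from \Cref{thm:discrete}, from \Cref{cor:weak-normality}, or, for a lattice, from \Cref{thm:lc-criterion} and its corollaries. Then every essentially free pmp action of $\Gamma$ has cost one, so in particular $\operatorname{cost}(\Gamma)=1$. Gaboriau's inequality \cite[Corollaire~3.23]{GaboriauL2} states that
\[
\beta_1^{(2)}(\Gamma)-\beta_0^{(2)}(\Gamma)+1\le \operatorname{cost}(\Gamma).
\]
If $\Gamma$ is infinite, then $\beta_0^{(2)}(\Gamma)=0$, and the inequality gives $\beta_1^{(2)}(\Gamma)\le 0$, so $\beta_1^{(2)}(\Gamma)=0$.

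We must check that $\Gamma$ is infinite. In the discrete criterion this is implicit: if $\Gamma$ were finite, taking $F_n=\Gamma$ would give a ratio $1/|\Gamma|$ that does not tend to zero. Any choice of $F_n\subseteq\Gamma$ gives $|F_nSF_n^{-1}|\ge|F_n|$ and $|F_n|\le|\Gamma|$, so the ratio stays bounded below by $1/|\Gamma|$ and the hypothesis fails. In the locally compact case, $G$ is noncompact, so its lattices are infinite. For finite groups, "fixed price one" is in any case not the relevant notion.

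For the rank gradient, take a Farber sequence $(\Gamma_n)$ and form the associated profinite-type pmp action. This is the action of $\Gamma$ on the boundary of its coset tree, or more generally on $\varprojlim \Gamma/\Gamma_n$ with the limit measure. The Farber condition is exactly what makes this action essentially free. Ab\'ert--Nikolov \cite{AbertNikolov} for chains, and Ab\'ert--T\'oth \cite{AbertToth} for general Farber sequences, show that
\[
\operatorname{RG}(\Gamma,(\Gamma_n))=\operatorname{cost}(\text{this action})-1.
\]
Fixed price one makes this cost equal to one, so the rank gradient vanishes. The existence of the limit is also part of their theorem.

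The only real point needing care is matching hypotheses. First, the Farber condition must give essential freeness; this is the definition used in \cite{AbertToth}. Second, fixed price one must be applied to that specific action, which is legitimate because fixed price covers all essentially free pmp actions. Third, when $\Gamma$ is a lattice in $G$, it must be finitely generated. This holds for the lattices in the corollaries via Kazhdan's property (T) or cocompactness, but in general it has to be taken as the stated hypothesis "finitely generated discrete group". I expect no step to be a genuine obstacle; the proof is a direct citation chain.
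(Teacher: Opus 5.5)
Your proposal is correct and follows the paper's route: the paper records this corollary as a direct consequence of Gaboriau's inequality $\beta_1^{(2)}-\beta_0^{(2)}\le\cost-1$ together with the Ab\'ert--Nikolov and Ab\'ert--T\'oth results relating rank gradient to cost, applied to a group with fixed price one. Two of your side remarks are slightly off, though neither affects the argument:
\begin{enumerate}
\item For a non-nested Farber sequence there is no inverse limit $\varprojlim\Gamma/\Gamma_n$. Ab\'ert--T\'oth instead pass to local-global limits and show that fixed price $c$ gives rank gradient $c-1$ along every Farber sequence, which is exactly what you need.
\item Lattices in the corollaries need not be finitely generated, for example $\mathrm{SL}_2(\mathbb{F}_q[t])^2$ in $\mathrm{SL}_2(\mathbb{F}_q((t^{-1})))^2$. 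This is why finite generation has to be part of the hypothesis.
\end{enumerate}
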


The criterion also gives a direct proof of the classical fixed-price-one
theorem for amenable groups:

\begin{corollary}\label{cor:amenable-groups}
Every infinite finitely generated amenable group satisfies the hypothesis of
\Cref{thm:discrete}.  In particular, it has fixed price one.
\end{corollary}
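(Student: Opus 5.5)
Let $\Gamma$ be infinite, finitely generated and amenable, with $S$ a finite symmetric generating set containing $e$. It suffices to find finite sets $F_n\ni e$ with $|F_nSF_n^{-1}|/|F_n|^2\to0$; fixed price one then follows from \Cref{thm:discrete}. The natural choice is Følner sets. The key observation is that the product set $F SF^{-1}$ is controlled by the \emph{difference set} $FF^{-1}$ together with the boundary behaviour under $S$. A plain Følner set can have $|FF^{-1}|$ of order $|F|^2$: for instance in $\mathbb Z$, an interval of length $n$ has difference set of size about $2n$, but a sparse Følner set need not. So the sets should be chosen with a small difference set, and the best candidates are "balls" or well-structured Følner sets.

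I would proceed as follows. First, take a Følner sequence $A_n$ with $|A_nS\,\triangle\, A_n|/|A_n|\to 0$. Second, instead of using $A_n$ itself, take $F_n\subseteq A_n$ to be a large but well-chosen subset: I want $|F_n|^2\gg |A_nSA_n^{-1}|$. Since $F_nSF_n^{-1}\subseteq A_nSA_n^{-1}$ and $|A_nSA_n^{-1}|\le |A_nS|\,|A_n|\le 2|A_n|^2$, this only gives a bounded ratio. So a subset alone does not help, and the Følner property must be used in a quantitative way.

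The better route is a counting argument. For $g\in F SF^{-1}$, the number of representations $g=asb^{-1}$ is at least the number of $a\in F$ with $g^{-1}a\in FS^{-1}$. If $F$ is almost invariant under every $g$ in a large set, representations are plentiful. Concretely,
\[
\sum_{g}\#\{(a,s,b): asb^{-1}=g\}=|S||F|^2.
\]
Hence if every $g\in FSF^{-1}$ had at least $M$ representations, we would get $|FSF^{-1}|\le |S||F|^2/M$. The question is therefore whether a typical element of $FSF^{-1}$ has many representations. This is only plausible when $F$ is close to a subgroup or a box.

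So the cleanest plan uses a stronger structural input. Either $\Gamma$ contains an element of infinite order, or it does not. In the first case I would take $F_n$ to be a product $B_n\cdot\{\gamma^k:0\le k<L_n\}$, with $\gamma$ of infinite order and $B_n$ Følner, so that the $\gamma$-direction contributes a factor $L_n$ to $|F|$ but only $2L_n$ to the difference set. In the second case I would use infinite finite subgroups or locally finite structure; alternatively one can invoke \Cref{cor:weak-normality} with $\Lambda=\Gamma$. Indeed, $\Lambda=\Gamma$ is an infinite amenable subgroup and $\Lambda\cap s^{-1}\Lambda s=\Gamma$ is infinite, so fixed price one follows directly. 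To obtain the stated hypothesis of \Cref{thm:discrete} itself, I would therefore mimic the proof of \Cref{cor:weak-normality} and extract the sets $F_n$ it produces. I expect that proof builds $F_n$ from Følner sets of $\Lambda$ arranged so that $F_nSF_n^{-1}$ has size $o(|F_n|^2)$.

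The main obstacle is producing, for a general amenable group, Følner-type sets whose difference sets are sub-quadratic. I would first try to show that a suitable tiling or quasi-tiling (Ornstein–Weiss) yields sets $F$ with $|FSF^{-1}|\le \varepsilon|F|^2$. The idea is to take $F$ as a union of $N$ translates $c_iT$ of a single Følner tile $T$, with the centers $c_i$ chosen randomly. Then $|FSF^{-1}|\le N^2|TST^{-1}|$, which is still bounded by $N^2\cdot 2|T|^2\asymp|F|^2$. I therefore expect this step to need a genuinely sharper idea, and the likely one is the iterated structure of tiles.
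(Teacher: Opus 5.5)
There is a genuine gap: the proposal never constructs the sets $F_n$, as your last paragraph concedes. Invoking \Cref{cor:weak-normality} with $\Lambda=\Gamma$ is not circular, and it gives the ``in particular'' clause as a black box. But that corollary is a strict generalization of the present statement. For the main assertion, that $\Gamma$ satisfies the hypothesis of \Cref{thm:discrete}, you only guess what its proof produces.

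The case split does not close the gap either. For $F_n=B_n\{\gamma^k:0\le k<L_n\}$, the set that matters is $\{\gamma^k\}S\{\gamma^{-j}\}$, not $\{\gamma^{k-j}\}$, and it can have order $L_n^2$. For instance, in $BS(1,2)=\langle a,t\mid tat^{-1}=a^2\rangle$ the elements $t^kat^{-j}$, $0\le k,j<L$, are pairwise distinct, so nothing beats the trivial bound. In the torsion case there is no locally finite structure to exploit, because a finitely generated infinite group is never locally finite (the Grigorchuk group is an example). Finally, your starting premise is mistaken: no boxes or special structure are needed. Any F{\o}lner set that is sufficiently invariant under a suitable auxiliary finite set already works.

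The missing ingredient is the Ruzsa triangle inequality $|B|\,|AC^{-1}|\le|AB^{-1}|\,|BC^{-1}|$, which holds in any group. It is exactly your representation-counting idea, but with representations counted in thickened sets rather than in $F\times S\times F$.

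Fix a finite $K\subseteq\Gamma$ with $|K|\ge M$, which is possible since $\Gamma$ is infinite. By amenability, choose a finite nonempty $E$ with $|E(K^{-1}\cup SK^{-1})|\le 2|E|$. For each $g\in ESE^{-1}$, fix one representation $g=asb^{-1}$. Then the pairs $(ask^{-1},bk^{-1})\in ESK^{-1}\times EK^{-1}$, for $k\in K$, are $|K|$ distinct factorizations $g=xy^{-1}$, and distinct $g$ give distinct pairs. Hence
\[
|K|\,|ESE^{-1}|\le |ESK^{-1}|\,|EK^{-1}|\le 4|E|^2 .
\]
Taking $h\in E$ and $F=h^{-1}E$ gives $e\in F$ and $|FSF^{-1}|/|F|^2\le 4/M$. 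Letting $M\to\infty$ yields the hypothesis of \Cref{thm:discrete}, with no boxes, tilings, elements of infinite order, or Ornstein--Weiss.
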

The argument for the above uses only the F{\o}lner criterion for amenability and does not invoke the
Ornstein--Weiss theorem.

The common idea behind the proofs is to construct a sparse connected equivariant graph using the sets \(F_n\). In the discrete case, we start from the Cayley graph and use the Abért--Weiss theorem that Bernoulli actions have maximal cost \cite{AbertWeiss}. The locally compact argument uses the point-process formulation of cost developed by Abért and Mellick, together with several tools from point-process theory. See also related sparse factor constructions in \cite{GrebikRecke}. For background, we refer the reader to \cite{AbertMellick} and to the book \cite{LastPenrose}. 

\paragraph{\textbf{Acknowledgments}}
We are very grateful to Konstantin Recke for useful discussions and suggestions regarding the locally-compact case and to Miko\l{}aj Fr\k{a}czyk and Sam Mellick for helpful comments.

\section{The discrete case}

Throughout this section, $\Gamma$ is an infinite finitely generated group and
$S$ is a finite symmetric generating set with $e\in S$.

\begin{definition}[Cost]\label{def:cost}
Let \((X,\mu)\) be a standard probability space and let \(\mathcal R\) be a
countable pmp equivalence relation on \(X\). A \emph{graphing} of
\(\mathcal R\) is a countable family
\[
    \Phi=\{\varphi_i:A_i\longrightarrow B_i\}_{i\in I}
\]
of measure-preserving Borel isomorphisms between Borel subsets of \(X\) such
that
\[
 (x,\varphi_i(x))\in\mathcal R
    \qquad
    \text{for every \(i\in I\) and \(x\in A_i\)}.
\]
The graphing \(\Phi\) defines a graph on \(X\) by joining
\(x\) to \(\varphi_i(x)\). We say that
\(\Phi\) \emph{generates} \(\mathcal R\) if, on a conull
subset of \(X\), the connected components of
this graph are precisely the \(\mathcal R\)-classes.

The cost of \(\Phi\) is
\[
    \cost(\Phi)=\sum_{i\in I}\mu(A_i),
\]
and the cost of \(\mathcal R\) is
\[
    \cost(\mathcal R)
    =
    \inf\bigl\{
        \cost(\Phi):
        \Phi\text{ is a graphing that generates }\mathcal R
    \bigr\}.
\]
For an essentially free pmp action
\(\Gamma\curvearrowright(X,\mu)\), we write
\[
    \cost(\Gamma\curvearrowright X)
    :=
    \cost(\mathcal R_{\Gamma\curvearrowright X}),
\]
where \(\mathcal R_{\Gamma\curvearrowright X}\) denotes its orbit equivalence
relation.
\end{definition}

\subsection{Proof of the criterion}

\begin{proof}[Proof of \Cref{thm:discrete}]
Fix a finite set $F\subseteq\Gamma$ with $e\in F$, and a parameter
$p\in(0,1)$.  Let
\(
        \Omega=\{0,1\}^{\Gamma}
\)
with Bernoulli product measure of parameter $p$, equipped with the Bernoulli
shift.  For $\omega\in\Omega$, consider the set
\[
        Y(\omega)=\{g\in\Gamma:\omega(g)=1\}.
\]
We think of the elements of $Y(\omega)$ as those elements which were selected
by a random process.

A point $g\in\Gamma$ is called \emph{bad} if
\[
        Y(\omega)\cap gF^{-1}=\varnothing.
\]
Let $C(\omega)$ be the union of the selected points and the bad points.  These
two subsets are disjoint because $e\in F$.  For $c\in C(\omega)$ define the
patch
\[
        U_c(\omega)=
        \begin{cases}
        cF, & \text{if $c$ is selected},\\
        \{c\}, & \text{if $c$ is bad}.
        \end{cases}
\]
These patches cover $\Gamma$: if $g$ is bad then $g\in U_g(\omega)$, while if
$g$ is not bad, then some selected $y\in Y(\omega)\cap gF^{-1}$ satisfies
$g\in yF$.  The construction is equivariant:
\[
        C(\gamma\omega)=\gamma C(\omega),
        \qquad
        U_{\gamma c}(\gamma\omega)=\gamma U_c(\omega).
\]

Define a graph $\calN(\omega)$ with vertex set $C(\omega)$. Two distinct
vertices $c,c'$ are adjacent if the corresponding patches are at distance at
most one. In other words, if \(U_c(\omega)s\cap
U_{c'}(\omega)\neq\varnothing\) for some \(s\in S\).
Since the patches cover the Cayley graph, the graph
$\calN(\omega)$ is connected.

Let $\calR_p$ be the orbit equivalence relation of the Bernoulli shift, and set
\[
        A=\{\omega:e\in C(\omega)\}
\]
Then
\(
        \bbP(A)=p+(1-p)^{|F|}>0.
\)
Moreover, $A$ is a complete section in the sense that it meets every equivalence class. Indeed, almost surely there is a selected point
$y$, and then $e$ is selected in $y^{-1}\omega$.

For $\gamma\neq e$, let $A_\gamma$ be the set of all $\omega\in A$ such that
$\gamma\in C(\omega)$ and $e,\gamma$ are adjacent in $\calN(\omega)$.  On
$A_\gamma$, define the partial isomorphism
\[
        \varphi_\gamma(\omega)=\gamma^{-1}\omega.
\]
Equivariance implies that $\varphi_\gamma(\omega)\in A$.  Put
\[
        \eta(p,F)=\sum_{\gamma\neq e}\bbP(A_\gamma).
\]

The partial isomorphisms $\{\varphi_\gamma\}_{\gamma\neq e}$ generate
$\calR_p|_A$.  Indeed, suppose $\theta=\alpha\omega$ with
$\omega,\theta\in A$.  Since
$e\in C(\alpha\omega)=\alpha C(\omega)$, we have
$\alpha^{-1}\in C(\omega)$.  By connectedness of $\mathcal N(\omega)$, choose a path
\[
        e=c_0,c_1,\ldots,c_m=\alpha^{-1}
\]
in $\calN(\omega)$.  At the $i$th step put
\[
        \gamma_i=c_i^{-1}c_{i+1}.
\]
In $c_i^{-1}\omega$, the vertices $e$ and $\gamma_i$ are adjacent centres, so
$\varphi_{\gamma_i}$ is defined there and sends $c_i^{-1}\omega$ to
$c_{i+1}^{-1}\omega$.  Iterating gives
\[
        \omega\longmapsto c_m^{-1}\omega=\alpha\omega=\theta.
\]

Hence $\calR_p|_A$ has a graphing of cost
$\eta(p,F)/\bbP(A)$ with respect to the normalized measure on $A$.  By
Gaboriau's compression formula \cite[Proposition~II.6]{GaboriauCost},
\[
\begin{aligned}
\cost(\calR_p)-1
&=\bbP(A)\bigl(\cost(\calR_p|_A)-1\bigr)
  \leq \bbP(A)\cost(\calR_p|_A)
  \leq \eta(p,F).
\end{aligned}
\]
Thus
\[
        \cost(\calR_p)\leq 1+\eta(p,F).
\]

It remains to estimate $\eta(p,F)$.  We split possible edges in $\mathcal{N}(\omega)$ according
to whether their two endpoints are selected or bad.

If both $e$ and $\gamma$ are selected, then adjacency of $F$ and $\gamma F$
implies
\(
        \gamma\in FSF^{-1}.
\)
The probability that both $e$ and $\gamma$ are selected is $p^2$, so the total
selected--selected contribution is at most
\[
        p^2|FSF^{-1}|.
\]

If both endpoints are bad, then their patches are singletons, so adjacency
forces $\gamma\in S$.  Since
\[
        \bbP(e\text{ is bad})=(1-p)^{|F|}\leq e^{-p|F|},
\]
the bad--bad contribution is at most
\[
        |S|e^{-p|F|}
\]

If $e$ is bad and $\gamma$ is selected, then adjacency gives
$\gamma f=a$ for some $f\in F$ and $a\in S$, hence
\(
        \gamma\in SF^{-1}.
\)
Apart from the incompatible cases where $\gamma\in F^{-1}$, badness of $e$
and selection of $\gamma$ are independent.  Hence this contribution is at
most
\[
        pe^{-p|F|}|SF^{-1}|.
\]
Similarly, the selected--bad contribution is at most
\[
        pe^{-p|F|}|FS|.
\]
Therefore
\[
\eta(p,F)
\leq
p^2|FSF^{-1}|
+|S|e^{-p|F|}
+pe^{-p|F|}\bigl(|SF^{-1}|+|FS|\bigr).
\]

Now assume that $F=F_n$ satisfies the hypothesis of the theorem, and set
\[
        \alpha_n=\frac{|F_nSF_n^{-1}|}{|F_n|^2}.
\]
Since $e\in F_n\cap S$, the set $F_nSF_n^{-1}$ contains $F_n$, so
\(
        \alpha_n\geq |F_n|^{-1}.
\)
Define
\[
        a_n=\alpha_n^{-1/4},
        \qquad
        p_n=\frac{a_n}{|F_n|}.
\]
Then $p_n<1$ for large $n$, while
\[
        p_n|F_n|=a_n\to\infty,
        \qquad
        a_n^2\alpha_n=\alpha_n^{1/2}\to 0.
\]
Using
\[
        |SF_n^{-1}|,\ |F_nS|\leq |S||F_n|,
\]
the preceding estimate gives
\[
\eta(p_n,F_n)
\leq
 a_n^2\alpha_n
 +|S|e^{-a_n}
 +2|S|a_ne^{-a_n}
\longrightarrow 0.
\]
Therefore the costs of the Bernoulli shifts with parameters $p_n$ tend to one
from above.

Finally, by the Ab\'ert--Weiss theorem, \cite[Corollary~2]{AbertWeiss}, Bernoulli
actions have maximal cost.  Hence for every essentially free pmp action
$\alpha$ of $\Gamma$,
\[
        \cost(\alpha)
        \leq \cost(\calR_{p_n})
        \leq 1+\eta(p_n,F_n).
\]
Letting $n\to\infty$ gives $\cost(\alpha)\leq1$.  Since $\Gamma$ is infinite, $\cost(\alpha)=1$, and $\Gamma$ has fixed price one.
\end{proof}

\begin{remark}
There is also a direct version of the preceding argument in which the
vertex set remains all of $\Gamma$.  Fix an arbitrary ordering on
\(
        F^{-1}\).
For each non-bad $g$, let
$a_\omega(g)$ be the first selected point in $gF^{-1}$. 
The map $a_\omega$ is equivariant.
Define a graph $\mathcal G(\omega)$ on $\Gamma$ as follows.  Join every
non-bad $g$ to $a_\omega(g)$; join two distinct selected points 
whenever their $F$-patches are at Cayley distance at most one,
and retain every Cayley edge having at least one bad endpoint. 
It is not hard to see that this graph is connected. 
Since $\mathcal G$ is an equivariant connected graph on the whole orbit,
\[
        \cost(\calR_p)
        \leq
        \frac12\bbE\bigl[\deg_{\mathcal G(\omega)}(e)\bigr],
\]
and the same probabilistic estimates bound the expected degree of the identity.
\end{remark}

\subsection{Applications}

We start with direct products.

\begin{lemma}\label{lem:product-neighbourhoods}
For \(i=1,2\), let \(H_i\) be a noncompact compactly generated lcsc group,
let \(\mu_i\) be a left Haar measure, and let \(K_i\) be a compact symmetric
generating neighbourhood.  Put
\(
        \mu=\mu_1\times\mu_2,
\) and \(
        S=K_1\times K_2.
\)
Then there are compact sets \(F_N\ni(e,e)\) such that
\[
        \frac{\mu(F_NSF_N^{-1})}{\mu(F_N)^2}\longrightarrow0.
\]
\end{lemma}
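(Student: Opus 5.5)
The natural choice is a "cross-shaped" set. Let \(B_i(n)=K_i^n\) be the word-metric balls in \(H_i\); they are compact, symmetric, contain \(e\), and exhaust \(H_i\). Since \(H_i\) is noncompact, \(\mu_i(B_i(n))\to\infty\). The idea is to pick a large ball \(P=B_1(n)\) in the first factor and a much smaller ball \(Q=B_2(m)\) in the second, and to put
\[
        F=(P\times\{e\})\cup(\{e\}\times Q)\qquad\text{thickened to}\qquad F=(P\times K_2)\cup(K_1\times Q).
\]
The thickening makes \(F\) compact of positive measure and keeps \((e,e)\in F\). Then \(\mu(F)\ge \mu_1(P)\mu_2(K_2)+\mu_1(K_1)\mu_2(Q)-\mu_1(K_1)\mu_2(K_2)\).

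To bound \(F S F^{-1}\), expand it as a union of four products, according to which piece of \(F\) appears on each side.
\begin{itemize}
\item \((P\times K_2)S(K_2^{-1}\times P^{-1})\) is not a product; the correct computation is \((P\times K_2)(K_1\times K_2)(P\times K_2)^{-1}=PK_1P^{-1}\times K_2^3\). This has measure about \(\mu_1(B_1(2n+1))\,\mu_2(K_2^3)\), which is of order \(\mu_1(P)^2\) at worst. That is too large, so the cross with \(P\) on both sides fails.
\end{itemize}

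So the cross must be arranged so that each long direction appears only once in \(FSF^{-1}\), and a genuine product does exactly this. Take \(F=P\times Q\). Then
\[
        FSF^{-1}=PK_1P^{-1}\times QK_2Q^{-1},
\]
and the ratio becomes
\[
        \frac{\mu_1(PK_1P^{-1})\,\mu_2(QK_2Q^{-1})}{\mu_1(P)^2\mu_2(Q)^2}.
\]
This factors, so it suffices to make each factor small, i.e.\ to find \(P\) with \(\mu_1(PK_1P^{-1})/\mu_1(P)^2\) small. That is the single-group criterion, which fails for general groups such as free groups. The product structure has to be used differently.

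The right set is the "twisted diagonal" cross
\[
        F=(P\times\{e\}K_2)\cup(K_1\times Q),
\]
with \(FSF^{-1}\) computed with the inverse taken piecewise. The four pieces are:
\begin{itemize}
\item \(PK_1P^{-1}\times K_2^3\),
\item \(PK_1^2\times K_2^2Q^{-1}\),
\item \(K_1^2P^{-1}\times QK_2^2\),
\item \(K_1^3\times QK_2Q^{-1}\).
\end{itemize}
The first and last pieces are the problematic ones. The key point, which I expect to be the main obstacle, is that inside the union these pieces are not independent, and in the approach above they cannot be controlled for nonamenable factors.

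My actual plan is therefore to use \(F=P\times Q\) but with \(P,Q\) chosen so that \(\mu_1(P)\mu_2(Q)\) is a single large quantity. For free-like groups, \(\mu_1(PK_1P^{-1})\approx\mu_1(P)^2\), so the ratio is roughly a constant and the bound does not decay. The fallback is to take \(F_N=\{(g,h)\}\) given by the union over \(j\le N\) of \(B_1(j)\times\{h_j\}\)-type "staircases", where the \(h_j\) are spread apart in \(H_2\) so that differences \(f f'^{-1}\) rarely coincide. Then \(\mu(F_N)\approx N\mu_1(B_1(r))\mu_2(K_2)\), while the second coordinates \(h_jK_2h_k^{-1}\) of \(FSF^{-1}\) have measure about \(N^2\mu_2(K_2^3)\) and the first coordinates have measure about \(\mu_1(B_1(2r+1))\). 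The aim is to balance \(r\) and \(N\) so that
\[
        \frac{N^2\,\mu_1(B_1(2r+1))}{N^2\,\mu_1(B_1(r))^2}\to0.
\]
This is the step I am least sure about, since it needs the ball measures to grow fast enough that \(\mu_1(B_1(r))^2\) beats \(\mu_1(B_1(2r+1))\).
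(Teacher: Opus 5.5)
Your proposal does not reach a proof, and the step you flag as uncertain is where it breaks. The staircase $\bigcup_{j\le N}B_1(r)\times h_jK_2$ is just the product set $B_1(r)\times Q$ with $Q=\bigcup_j h_jK_2$. By your own factorisation the $N^2$ cancels exactly (your last display shows this), so there is nothing to balance. What remains, up to constants depending on $K_2$, is the single-factor ratio $\mu_1(B_1(2r+1))/\mu_1(B_1(r))^2$, and this stays bounded below when $H_1$ is a free group.

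More generally, for $H_1=H_2=F_2$ no union of boundedly many product sets can work. Since $F_2$ does not have fixed price one, \Cref{thm:discrete} forces $c:=\inf_P|PK_1P^{-1}|/|P|^2>0$; the infimum may be taken over all finite nonempty $P$, because left translation only conjugates $PK_1P^{-1}$. Now if $F=\bigcup_{i=1}^kP_i\times Q_i$, then
\[
|FSF^{-1}|\ \ge\ \max_i |P_iK_1P_i^{-1}|\,|Q_iK_2Q_i^{-1}|\ \ge\ c^2\max_i|P_i\times Q_i|^2\ \ge\ \frac{c^2}{k^2}\,|F|^2 .
\]
So products, crosses, twisted crosses and staircases all fail. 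Any admissible sequence $F_N$ must consist of an unbounded number of product pieces.

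The missing idea is to place $\asymp\log T$ product pieces along a hyperbola, after normalising each factor by volume. Put $\ell_i(x)=\min\{n\ge0:x\in K_i^{n+2}\}$ and $b_i(n)=\mu_i(K_i^{n+2})/\mu_i(K_i)$. A convolution argument shows that $b_i$ is submultiplicative. Hence the gauge $w_i=b_i\circ\ell_i$ satisfies $\mu_i\{w_i\le t\}\asymp t$ and $w_i(xky^{-1})\le b_i(3)\,w_i(x)\,w_i(y)$ for $k\in K_i$.

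Now take $F_T=\{(x_1,x_2):w_1(x_1)w_2(x_2)\le T\}$.
\begin{itemize}
\item By Fubini, $\mu(F_T)\asymp T\int_{\{w_1\lesssim T\}}d\mu_1/w_1\asymp T\log T$. Heuristically, each of the $\asymp\log T$ dyadic ranges of $w_1$ contributes measure $\asymp T$.
\item The gauge inequality sends every cross term into the single region $F_TSF_T^{-1}\subseteq F_{CT^2}$, with $C=b_1(3)b_2(3)$.
\item That region has measure $\asymp T^2\log T$ rather than $(T\log T)^2$, so the ratio is $\ll 1/\log T\to0$.
\end{itemize}
The volume normalisation is essential. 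For $\mathbb Z\times F_2$, the unnormalised analogue, the $\ell^1$ word-metric balls $\{(m,g):|m|+|g|\le R\}$, already satisfy $FF^{-1}\supseteq\{0\}\times B_{F_2}(2R)$ while $|F|\asymp 3^R$, so the ratio stays bounded below.
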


\begin{proof}
For each \(i\), put \(u_i=\mu_i(K_i)\), and define
\[
        b_i(n)=u_i^{-1}\mu_i(K_i^{n+2}).
\]
Then \(b_i\) is nondecreasing, unbounded, and submultiplicative.  Indeed, if
\(z=ab\in K_i^{m+n+2}\), where \(a\in K_i^{m+1}\) and
\(b\in K_i^{n+1}\), then \(aK_i\) contributes \(u_i\) to
\(
        (\one_{K_i^{m+2}}*\one_{K_i^{n+2}})(z),
\)
and integration gives \(b_i(m+n)\leq b_i(m)b_i(n)\). For \(x\in H_i\), set
\[
        \ell_i(x)=\min\{n\geq0:x\in K_i^{n+2}\},
        \qquad
        w_i(x)=b_i(\ell_i(x)).
\]
Writing
\(
        V_i(t)=\mu_i\{x:w_i(x)\leq t\},
\)
we have, for \(t\geq b_i(0)\),
\begin{equation}\label{eq:gauge-volume}
        V_i(t)\asymp t.
\end{equation}
Indeed, if \(r=\max\{n:b_i(n)\leq t\}\), then
\(V_i(t)=u_ib_i(r)\), while maximality and
\(b_i(r+1)\leq b_i(r)b_i(1)\) give
\(
        \frac{t}{b_i(1)}<b_i(r)\leq t.
\)
Moreover, if \(x,y\in H_i\) and \(k\in K_i\), then
\begin{equation}\label{eq:gauge-product}
        w_i(xky^{-1})\leq b_i(3)w_i(x)w_i(y).
\end{equation}
Indeed, if \(\ell_i(x)=m\) and \(\ell_i(y)=n\), then
\(xky^{-1}\in K_i^{m+n+5}\), and submultiplicativity applies.

Put
\[
        W(x_1,x_2)=w_1(x_1)w_2(x_2),
        \qquad
        F_T=\{(x_1,x_2):W(x_1,x_2)\leq T\}.
\]
These sets are finite unions of compact rectangles, hence compact.
By \eqref{eq:gauge-volume}, we get
\[
        \int_{\{w_i\leq R\}}\frac{d\mu_i}{w_i}\asymp\log R.
\]
Consequently, by Fubini,
\[
\begin{aligned}
        \mu(F_T)
        =\int_{H_1}
          V_2\!\left(\frac{T}{w_1(x)}\right)d\mu_1(x) 
        \asymp
          T\int_{\{w_1\leq T/b_2(0)\}} \frac{d\mu_1(x)}{w_1(x)}
         \asymp T\log T.
\end{aligned}
\]
On the other hand, \eqref{eq:gauge-product} gives, with
\(C=b_1(3)b_2(3)\),
\(F_TSF_T^{-1}\subseteq F_{CT^2}.
\)
Therefore
\[
        \frac{\mu(F_TSF_T^{-1})}{\mu(F_T)^2}
        \ll
        \frac{T^2\log T}{T^2(\log T)^2}
        \longrightarrow0.
\]
Taking \(F_N=F_{T_N}\), where \(T_N\to\infty\) and
\(T_N\geq W(e,e)\), proves the lemma.
\end{proof}

We prove the finitely generated case. The general case follows by taking an increasing union of finitely generated subgroups.

\begin{corollary}\label{cor:discrete-products}
Let $G$ and $H$ be infinite finitely generated groups.  Then $G\times H$ has
fixed price one.
\end{corollary}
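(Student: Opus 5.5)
We will deduce the statement from \Cref{thm:discrete} by way of \Cref{lem:product-neighbourhoods}, applied with the discrete topology. Equip the finitely generated groups $G$ and $H$ with the discrete topology. Each is then a noncompact, compactly generated lcsc group: noncompact because it is infinite, and compactly generated because it has a finite generating set. Take $\mu_1,\mu_2$ to be counting measure. Let $K_1\subseteq G$ and $K_2\subseteq H$ be finite symmetric generating sets containing the identity; these are compact symmetric generating neighbourhoods, since $\{e\}$ is open. \Cref{lem:product-neighbourhoods} then produces finite sets $F_N\ni(e,e)$ in $\Gamma=G\times H$ with
\[
        \frac{|F_NSF_N^{-1}|}{|F_N|^2}\longrightarrow 0,
        \qquad S=K_1\times K_2 .
\]
Here compact sets are finite and $\mu=\mu_1\times\mu_2$ is counting measure on $\Gamma$.

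Next we check that $S$ satisfies the hypotheses placed on the generating set in \Cref{thm:discrete}. It is finite, and it is symmetric because each $K_i$ is. It contains $(e,e)$. It generates $\Gamma$, since it contains $K_1\times\{e\}$ and $\{e\}\times K_2$, and these generate $G\times\{e\}$ and $\{e\}\times H$ respectively. Thus every hypothesis of \Cref{thm:discrete} holds for $\Gamma$, $S$ and $(F_N)$, and so $G\times H$ has fixed price one.

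There is little real obstacle here; the only point requiring care is that the gauge construction in the proof of \Cref{lem:product-neighbourhoods} uses nothing beyond compactness and generation, so it goes through verbatim for counting measure. For instance, $b_i$ is unbounded because $G$ and $H$ are infinite, so the balls $K_i^{n}$ grow without bound. One might also worry about the identity $V_i(t)=u_ib_i(r)$. In the discrete setting it still holds: $\{w_i\le t\}=K_i^{r+2}$ because $b_i$ is nondecreasing, and $\mu_i(K_i^{r+2})=u_ib_i(r)$ by definition. Hence no modification is needed and the corollary follows directly.
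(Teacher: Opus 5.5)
Your proposal is correct and follows the paper's own argument. The paper likewise takes finite symmetric generating sets, sets $S=K_G\times K_H$, invokes \Cref{lem:product-neighbourhoods} with counting measure to obtain finite $F_N\ni(e,e)$, and concludes by \Cref{thm:discrete}. Your extra checks are details the paper leaves implicit: that $e\in K_i$ makes $K_i$ a neighbourhood in the discrete topology and makes the balls $K_i^n$ nested, and that $S$ meets the hypotheses of \Cref{thm:discrete}.
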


\begin{proof}
Choose finite symmetric generating sets \(K_G,K_H\), and put \(S=K_G\times K_H\).  By
\Cref{lem:product-neighbourhoods}, there are finite sets
\(F_N\subseteq G\times H\), with \((e,e)\in F_N\), such that
\Cref{thm:discrete} applies to \(G\times H\).
\end{proof}


\begin{corollary}\label{cor:finite-subgroups}
Suppose that $K_n<\Gamma$ are finite subgroups such that
\[
        \min_{s\in S}|K_n\cap sK_ns^{-1}|\longrightarrow\infty.
\]
Then $\Gamma$ has fixed price one.
\end{corollary}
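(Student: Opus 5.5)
Take $F_n=K_n$ in \Cref{thm:discrete}; it then suffices to prove $|K_nSK_n^{-1}|/|K_n|^2\to0$. Since $K_n$ is a subgroup, $K_n^{-1}=K_n$ and
\[
K_nSK_n^{-1}=\bigcup_{s\in S}K_nsK_n,
\]
a union of at most $|S|$ double cosets. So it is enough to show that for each fixed $s\in S$ we have $|K_nsK_n|/|K_n|^2\to0$, and then sum over the finitely many $s$.

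The key step is the double coset count. Consider the map $K_n\times K_n\to K_nsK_n$, $(a,b)\mapsto asb$. The group $K_n\cap sK_ns^{-1}$ acts freely on $K_n\times K_n$ by $c\cdot(a,b)=(ac^{-1},s^{-1}cs\,b)$, and this action preserves the product: $ac^{-1}s s^{-1}csb=asb$. The action is well defined because $c\in K_n$ gives $ac^{-1}\in K_n$, and $c\in sK_ns^{-1}$ gives $s^{-1}cs\in K_n$.

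The fibres are exactly the orbits. If $asb=a'sb'$, set $c=a'^{-1}a\in K_n$. Then $c\,s b=sb'$, so $s^{-1}cs=b'b^{-1}\in K_n$, hence $c\in sK_ns^{-1}$. Thus every fibre has size $|K_n\cap sK_ns^{-1}|$, which gives
\[
|K_nsK_n|=\frac{|K_n|^2}{|K_n\cap sK_ns^{-1}|}.
\]

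Combining these,
\[
\frac{|K_nSK_n^{-1}|}{|K_n|^2}\le \sum_{s\in S}\frac{1}{|K_n\cap sK_ns^{-1}|}\le \frac{|S|}{\min_{s\in S}|K_n\cap sK_ns^{-1}|}\longrightarrow0,
\]
and \Cref{thm:discrete} applies. The condition $e\in F_n$ holds because each $K_n$ is a subgroup. Infiniteness of $\Gamma$ follows since $|K_n|\to\infty$, so $\Gamma$ contains arbitrarily large finite subgroups.

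The only genuine step is the fibre identification in the double coset count; the rest is bookkeeping.
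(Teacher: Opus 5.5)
Your proof is correct and follows the paper's own argument: take $F_n=K_n$ in \Cref{thm:discrete}, write $K_nSK_n^{-1}$ as the union of the double cosets $K_nsK_n$, apply the count $|K_nsK_n|=|K_n|^2/|K_n\cap sK_ns^{-1}|$, and sum over $s\in S$. Where the paper simply asserts the double-coset count, you also verify it by identifying the fibres of $(a,b)\mapsto asb$ with orbits of a free action of $K_n\cap sK_ns^{-1}$.
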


\begin{proof}
This follows since for subgroups, we have \(|K_nsK_n|=|K_n|^2/|K_n\cap sK_ns^{-1}|\) for every \(s\in S\).
\end{proof}

\subsection{Amenable groups}
\begin{proof}[Proof of \Cref{cor:amenable-groups}]
We use the following Ruzsa triangle inequality: for finite 
\(A,B,C\subseteq\Gamma\),
\[
        |AC^{-1}|\leq \frac{|AB^{-1}|\,|BC^{-1}|}{|B|}.
\]
Fix \(M>0\).  Choose a finite set \(K\subseteq\Gamma\) with \(|K|\geq M\).
By amenability, choose a finite nonempty set \(E\subseteq\Gamma\) such that
\[
        |E(K^{-1}\cup SK^{-1})|\leq 2|E|.
\]
Then \(|EK^{-1}|\leq2|E|\) and \(|ESK^{-1}|\leq2|E|\).  Applying Ruzsa with
\(A=ES\), \(B=K\), and \(C=E\), we get
\[
        |ESE^{-1}|
        \leq
        \frac{|ESK^{-1}|\,|KE^{-1}|}{|K|}
        =
        \frac{|ESK^{-1}|\,|EK^{-1}|}{|K|}
        \leq
        \frac{4}{M}|E|^2.
\]
Choose \(h\in E\) and put \(F=h^{-1}E\).  Then \(e\in F\) and
\(
        |FSF^{-1}|=|ESE^{-1}|.
\)
Since \(M\) was arbitrary, the sparse product-neighbourhood hypothesis follows.
\end{proof}

\begin{proof}[Proof of \Cref{cor:weak-normality}]
Fix \(M\geq1\). For each \(s\in S\), choose a finite set
\(K_s\subseteq\Lambda\cap s^{-1}\Lambda s\) with \(|K_s|\geq M\).
By amenability of \(\Lambda\), there is a finite nonempty
\(E\subseteq\Lambda\) such that
\[
 |EK_s^{-1}|,\ \bigl|E(sK_s^{-1}s^{-1})\bigr|\leq 2|E|
 \qquad (s\in S).
\]
The Ruzsa triangle inequality from the preceding proof, applied to
\(Es,K_s,E\), gives
\[
 |EsE^{-1}|
 \leq
 \frac{\bigl|E(sK_s^{-1}s^{-1})\bigr|\,|EK_s^{-1}|}{|K_s|}
 \leq \frac{4}{M}|E|^2 .
\]
Choose \(h\in E\) and put \(F=h^{-1}E\), so that \(e\in F\). Summing over
\(s\in S\), we obtain
\[
 \frac{|FSF^{-1}|}{|F|^2}
 =\frac{|ESE^{-1}|}{|E|^2}
 \leq \frac{4|S|}{M}.
\]
Since \(M\) is arbitrary, \Cref{thm:discrete} applies.
\end{proof}

\section{The locally compact case}

Throughout this section, $G$ is a noncompact, nondiscrete, compactly
generated, unimodular lcsc group, $\lambda$ is a
left Haar measure, and $S\subseteq G$ is a compact symmetric generating
neighbourhood of the identity.  For a compact Borel set $F\ni e$, set
\(
        \alpha_S(F)=\frac{\lambda(FSF^{-1})}{\lambda(F)^2}.
\)
The cost of a Poisson process is independent of its positive intensity by
\cite[Theorem~1.2]{AbertMellick}; denote this value by $c_{\mathrm P}(G)$.

\subsection{Cost and a separated process}

Fix a proper compatible left-invariant metric on $G$. For an invariant marked point process $\Pi$ of finite positive intensity and a
factor graph $\mathcal G$ on $\Pi$, let
\[
\inte(\Pi)=\bbE|\Pi\cap U|,
\qquad
\eint(\mathcal G)=\frac12\bbE\sum_{x\in\Pi\cap U}
                       \deg_{\mathcal G}(x),
\]
where $U$ is any Borel set of Haar measure one.  These quantities are
independent of $U$.  In the point-process formulation of cost,
\begin{equation}\label{eq:cost}
 \cost(\Pi)-1=
 \inf_{\mathcal G}\bigl(\eint(\mathcal G)-\inte(\Pi)\bigr),
\end{equation}
where the infimum runs over factor graphs whose vertex set is all of
\(\Pi\) 
\cite[Definition~4.1]{AbertMellick}.

Suppose a standard pmp action $G\curvearrowright(\Omega,\mu)$ produces an
equivariant, almost surely nonempty, locally finite set
$C(\omega)\subseteq G$ of finite intensity.  Fix a Borel injection
$I:\Omega\to[0,1]$ and mark $c\in C(\omega)$ by $I(c^{-1}\omega)$.  Any one
marked point recovers $\omega$, so the resulting marked process is isomorphic
to the original action.  This is the label-trickery construction of
\cite[Proposition~4.13]{AbertMellick}. The following proposition is contained in \cite{AbertMellick} and provides the analogue of the Cayley graph in this setting.

\begin{proposition}\label{prop:scaffold}
A unit-intensity Poisson process on $G$ has a factor consisting of a locally
finite point process $\calX$ and a locally finite graph $\calH$ on $\calX$
such that
\[
0<\inte(\calX)<\infty,
\qquad
h:=\eint(\calH)<\infty,
\]
$\calH$ is connected, and $x^{-1}x'\in S$ for every oriented edge
$(x,x')$ of $\calH$.
\end{proposition}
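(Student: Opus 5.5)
We construct the factor directly from the Poisson process \(\Pi\) by thinning it to a separated set and then joining nearby points. Fix a small \(r>0\) such that the closed ball \(B(2r)\) is contained in the interior of \(S\); this is possible because \(S\) is a neighbourhood of \(e\). Since the metric is proper and compatible, balls are compact and of positive finite Haar measure.

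\textbf{Step 1: the separated process \(\calX\).} First pick a maximal \(r\)-separated subset of \(\Pi\) equivariantly. Use the i.i.d.\ uniform labels available from the Poisson process, either as independent marks or by label-trickery splitting of the process. Run the standard Poisson thinning: keep a point if its label is the smallest among the points of \(\Pi\) within distance \(r\), and iterate this Luby-type procedure countably many times. Alternatively, take a Matérn-type hard-core thinning followed by further rounds until the set is maximal. By invariance and local finiteness of \(\Pi\), each round is an equivariant factor. Maximality can be achieved almost surely because every point of \(\Pi\) has almost surely only finitely many \(r\)-neighbours, so the greedy procedure terminates locally.

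Call the resulting set \(\calX\). It is \(r\)-separated, so its intensity is at most \(1/\lambda(B(r/2))<\infty\) up to constants. It is nonempty with positive intensity because it is almost surely nonempty. The maximality we really need is not among points of \(\Pi\) but covering of \(G\). To get it, we let \(\calX\) be \(r\)-dense in the Delone sense. Do this by adding, for each region not covered, Voronoi-type auxiliary points. A cleaner alternative is to take \(\calX\) to be a maximal \(r\)-separated subset of \(\Pi\) at high intensity: the Poisson process has gaps, but every point of \(G\) lies within distance \(R(\omega)\) of \(\calX\), with \(R\) random and finite almost surely.

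\textbf{Step 2: the graph \(\calH\).} Join \(x,x'\in\calX\) whenever \(x^{-1}x'\in S\). Since \(\calX\) is separated and \(S\) is compact, degrees are bounded by the packing number of \(S\) at scale \(r\). Hence \(\eint(\calH)\leq \tfrac12 D\,\inte(\calX)<\infty\), and the edge condition holds by construction.

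\textbf{Step 3: connectivity.} This is the main obstacle. Connectivity would be immediate if \(\calX\) were \(r\)-dense, since then any two points can be joined by a chain of steps in \(B(2r)\subseteq S\). A Poisson process is not dense, however; it has arbitrarily large holes somewhere in \(G\). I would therefore first prove that \(\calH\) has, almost surely, a unique infinite component after adding auxiliary points inside holes, or else modify Step 1.

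The modification is as follows. Consider the equivariant Voronoi tessellation of \(\Pi\). Inside each Voronoi cell, add a deterministic, equivariantly defined \(r\)-net, positioned relative to its nucleus using a Borel choice function; this is a factor because the cell is determined by \(\Pi\). The union is then \(r\)-dense and has finite intensity, since each cell contributes at most \(\lambda(\text{cell})/\lambda(B(r/2))\) points and the expected cell volume is finite. Separation across cell boundaries can fail. To fix it, finally thin by the deterministic maximal-separated selection from Step 1, which preserves \(2r\)-density. With density restored, connectivity follows from the chain argument using \(S\) generating and \(B(2r)\subseteq S\), and all of \(\inte\), \(\eint\) remain finite.
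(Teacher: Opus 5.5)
Your Step 3 density construction is a plausible, if heavier, substitute for the paper's appeal to the equivariant Delone set of Ab\'ert--Mellick. The connectivity argument, however, has a genuine gap.

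\textbf{The gap.} The claim that an $r$-dense $\calX$ lets you join any two points ``by a chain of steps in $B(2r)$'' requires small balls to generate $G$. That happens only when $G$ is connected. Suppose $G$ is totally disconnected, as for $\mathbf G(k)$ with $k$ non-Archimedean or a closed subgroup of $\operatorname{Aut}(X)$ for a building, which are the paper's main applications. Then for small $r$ the ball $B(2r)$ lies in a compact open subgroup $K_0$ by van Dantzig's theorem, and $B(2r)$-chains never leave a coset $xK_0$.

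Invoking that $S$ generates does not rescue your edge rule $x^{-1}x'\in S$ either. Write $x^{-1}x'=s_1\cdots s_n$ and approximate each $xs_1\cdots s_i$ by a point of $\calX$ within distance $\rho$. Consecutive approximants then differ by elements of $B(\rho)SB(\rho)$, which is not contained in $S$.

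A concrete failure: take $S=K\cup\{a^{\pm1}\}$ with $K$ compact open, for example $G=SL_2(\mathbb Q_p)$, $K=SL_2(\mathbb Z_p)$, $a$ diagonal. This is a legitimate compact symmetric generating neighbourhood. Nothing in your construction forces pairs with $x^{-1}x'=a^{\pm1}$ exactly, and for a generic choice of nets there are none. So all edges of $\calH$ stay inside cosets $xK$, and $\calH$ is disconnected however dense $\calX$ is.

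\textbf{The missing idea.} You need to insert intermediate vertices rather than hope that exact $S$-differences occur.
\begin{enumerate}
\item First build a connected graph on the separated--dense set whose edge labels $x^{-1}x'$ lie in a larger compact set $Q$. For instance, join $x,x'$ when $x^{-1}x'\in B(\rho)SB(\rho)$; this graph is connected by the $S$-word approximation above.
\item Choose $k$ with $Q\subseteq S^k$, and Borel maps $s_1,\dots,s_k:Q\to S$ with $q=s_1(q)\cdots s_k(q)$. This is a measurable selection, e.g.\ via Arsenin--Kunugui, since multiplication $S^{\times k}\to S^k$ has compact fibres.
\item Replace each edge $(x,xq)$ by the path $x,\,xs_1(q),\,xs_1(q)s_2(q),\dots,xq$, adding the intermediate points to the vertex set.
\item Check that the enlarged set is still locally finite of finite intensity. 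Path vertices in a bounded $U$ come from base points in $UK^{-1}$ with $K=S^k$, and by separation each base point emits boundedly many paths.
\end{enumerate}
This is exactly how the paper's proof secures both connectivity and $x^{-1}x'\in S$ for an arbitrary compact generating $S$.

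A minor point, even in the connected case: a maximal $r$-separated thinning of an $r$-dense set is only $2r$-dense. Your chains would then need roughly $B(4r+\delta)\subseteq S$ rather than $B(2r)\subseteq S$.
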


\begin{proof}
A Poisson process is free and ergodic
\cite[Proposition~2.7]{AbertMellick}.  Corollary~4.12 and Propositions~4.13 and
4.18 of \cite{AbertMellick} give, up to marking, an equivariant Delone set
$\Delta$; that is, almost surely, $\Delta$ is uniformly separated and coarsely
dense.  Hence the graph obtained by connecting points in $\Delta$ if they are
at distance at most $r$ is connected for some $r$.  Call it $\calH_0$.
Uniform separation makes its degree bounded, and the elements corresponding
to its edges lie in a compact set $Q$.

Choose $k$ with $Q\subseteq S^k$.  The multiplication map
$m:S^{\times k}\to S^k$ has nonempty compact fibres over $Q$, so the
Arsenin--Kunugui theorem \cite[Theorem~18.18]{Kechris} gives Borel maps
$s_i:Q\to S$ satisfying
$q=s_1(q)\cdots s_k(q)$.  Replace each ordered edge $(x,xq)$ of $\calH_0$ by
the corresponding $S$-path.  Write $K=S^k$.  Every path vertex lying in a
relatively compact set $U$ comes from a base point of
$\Delta\cap UK^{-1}$, and each base point starts only boundedly many path
edges.  Since $\Delta$ has finite intensity, the resulting process $\calX$ is
locally finite of finite positive intensity, and the connected graph $\calH$
is locally finite with finite edge intensity.
\end{proof}

\subsection{Proof of the locally compact criterion}

\begin{proof}[Proof of \Cref{thm:lc-criterion}]
Let $F_n$ be compact Borel sets containing $e$ with
$\alpha_S(F_n)\to0$.  We first prove that there is a constant
$A=A(G,S)<\infty$ such that, for every compact Borel set $F\ni e$ of positive
measure and every $t>0$,
\begin{equation}\label{eq:estimate}
 c_{\mathrm P}(G)-1\leq \frac{t^2}{2}\alpha_S(F)+Ae^{-t}.
\end{equation}
Fix $F$ and $t$, put $v=\lambda(F)$ and $p=t/v$, and let $Z$ and $\calY$ be
independent Poisson processes of intensities $1$ and $p$.  They can be
realized as a Poisson process $\Omega$ of intensity $1+p$, with the points of
$Z$ colored red and the points of $\calY$ colored blue, see
\cite[Theorems~5.6 and~5.8]{LastPenrose}.  IID marking preserves cost
\cite[Theorem~2.17]{FraczykMellickWilkens}, while Poisson cost is independent
of intensity; hence
\(
        \cost(\Omega)=c_{\mathrm P}(G).
\)
Now, construct $(\calX,\calH)$ from $Z$ using \Cref{prop:scaffold}.

We fix a Borel injection $\tau:G\to(0,1)$.  Call $x\in\calX$ \emph{bad} if
$\calY\cap xF^{-1}=\varnothing$, and write $\calB$ for the bad vertices.
Define a map $a:\calX\to G$ by $a(x)=x$ for $x\in\calB$, while for
$x\notin\calB$ let $a(x)$ be the unique $y\in\calY\cap xF^{-1}$ minimizing
$\tau(x^{-1}y)$.  The candidate set is finite, so this is well defined, and
the rule is measurable and equivariant.  We call $a(x)$ the \emph{anchor} of
$x$.

Set $\calC=a(\calX)$.  If $y\in\calY$ is an anchor, then
$a^{-1}(y)\subseteq\calX\cap yF$, so its fibre is finite.  If $b\in\calB$,
then $e\in F$ implies $b\notin\calY$, and hence $a^{-1}(b)=\{b\}$.  Thus the
anchor map is finite-to-one on the incidence space; its image is Borel by the
Lusin--Novikov theorem \cite[Theorem~18.10]{Kechris}.  Moreover,
$\calC\subseteq\calY\cup\calB$.  By unimodularity,
$\lambda(F^{-1})=v$, and independence yields
\[
        \inte(\calB)=\inte(\calX)e^{-pv}
                    =\inte(\calX)e^{-t}.
\]
Hence $\calC$ is an almost surely nonempty locally finite process of finite
positive intensity.  Mark it as described above, obtaining a process
$\widehat{\calC}$ isomorphic to $\Omega$.

Define a graph $\calN$ on the vertices of $\calC$.  Join distinct
$c,c'\in\calC$ whenever some edge $\{x,x'\}$ of $\calH$ satisfies
$a(x)=c$ and $a(x')=c'$.  Since the fibres are finite, $\calN$ is locally
finite, and projecting an $\calH$-path through $a$ shows that $\calN$ is
connected.  The marks recover $\Omega$, so $\calN$ is a factor graph of
$\widehat{\calC}$.

Suppose two blue anchors $y,y'$ are adjacent, witnessed by an edge
$(x,x')$ of $\calH$.  Then
\[
 y^{-1}y'=(y^{-1}x)(x^{-1}x')(x'^{-1}y')\in FSF^{-1}.
\]
The Mecke formula \cite[Theorem~4.4]{LastPenrose} therefore bounds the edge
intensity of the blue--blue part of $\calN$ by
\begin{equation}\label{eq:blue}
 \frac{p^2}{2}\lambda(FSF^{-1})
 =\frac{t^2}{2}\alpha_S(F).
\end{equation}

Let $\calN_{\mathrm{bad}}$ be the subgraph formed by edges with at least one
bad endpoint.  By unimodularity, the mass-transport principle bounds its edge
intensity by the intensity of its incidences at bad vertices.  Since every bad
anchor has singleton fibre,
$\deg_{\calN}(b)\leq\deg_{\calH}(b)$ for $b\in\calB$.  Conditioning on the
red process gives
\begin{align*}
 \eint(\calN_{\mathrm{bad}})
 \leq \bbE\sum_{x\in\calX\cap U}
       \one_{\{\calY\cap xF^{-1}=\varnothing\}}\deg_{\calH}(x)
 =e^{-t}\bbE\sum_{x\in\calX\cap U}\deg_{\calH}(x)
 =2he^{-t}.
\end{align*}
Since \(\calN\) is a connected factor graph of
\(\widehat{\calC}\cong\Omega\), \eqref{eq:cost} gives
\[
 c_{\mathrm P}(G)-1
 \leq \eint(\calN)-\inte(\calC)
 \leq\eint(\calN)
\leq\frac12t^2\alpha_S(F)+2he^{-t}.
\]
Thus \Cref{eq:estimate} holds with $A=2h$.

For fixed $t>0$, apply \Cref{eq:estimate} to $F_n$ and let $n\to\infty$.
Then $c_{\mathrm P}(G)-1\leq Ae^{-t}$; letting $t\to\infty$ gives
$c_{\mathrm P}(G)\leq1$.  Poisson processes have maximal cost
\cite[Theorem~1.2]{AbertMellick}, and every free pmp action is isomorphic to a
point process \cite[Theorem~1.1]{AbertMellick}.  Since $G$ is noncompact,
 every free pmp action of $G$ has cost at least one, so $G$ has fixed price
one.

Finally, let $\Gamma<G$ be a lattice, with
$\Gamma\curvearrowright(X,\mu)$ an arbitrary free pmp action.  By \cite[Example~2.14]{FraczykMellickWilkens}
\[
 \cost\bigl(\operatorname{Ind}_{\Gamma}^{G}X\bigr)-1
 =\frac{\cost(\Gamma\curvearrowright X)-1}{\covol(\Gamma)}.
\]
The induced action is free, so its cost is one, and so
$\cost(\Gamma\curvearrowright X)=1$, and
$\Gamma$ has fixed price one.
\end{proof}

\subsection{Metric form and higher-rank groups}

\begin{proof}[Proof of \Cref{cor:lc-metric}]
We have
\(
        B(R)SB(R)^{-1}\subseteq B(2R+D),
\)
so \Cref{thm:lc-criterion} applies along a sequence realizing the liminf.
\end{proof}

\begin{proof}[Proof of \Cref{cor:local-field-semisimple}]

Let \(r=\operatorname{rank}_k\mathbf G\).  Choose a maximal compact subgroup
\(K<G\), a positive Weyl chamber \(\mathfrak a^+\), and let
\(\mu:G\to\mathfrak a^+\) be the Cartan projection.  Let \(2\rho\) denote the
sum of the positive roots with multiplicities, and put
\(
        L(g)=\langle 2\rho,\mu(g)\rangle .
\)
By the Cartan integration formula in the Archimedean case, and by Macdonald's double-coset volume estimate in the
non-Archimedean case, see, for example, \cite{BenoistQuint,Macdonald}, defining \(F_R:=\{g\in G:L(g)\le R\}\), we get that
\(
\lambda(F_R)\asymp e^R R^{r-1}.
\)
  Since $r \geq 2$,
\Cref{thm:lc-criterion} applies.

\end{proof}

\begin{proof}[Proof of \Cref{cor:affine-buildings}]
Let \(C\) be the chamber set, fix \(c\in C\), and let \(\delta\) be the Weyl
distance.  If \(w=s_1\cdots s_n\) is reduced, put
\[
        q_w=q_{s_1}\cdots q_{s_n},\qquad L(w)=\log q_w .
\]
This is well defined, \(q_s>1\), and
\(
        |\{d\in C:\delta(c,d)=w\}|=q_w
\)
by \cite[(1.4) and Theorem~2.1]{AbramenkoParkinsonVanMaldeghem}.  Since the
affine Weyl group \(W\) is a finite extension of a translation lattice
\(\Lambda\simeq\mathbb Z^r\) \cite[Chapter~4]{HumphreysReflectionGroups}, the
restriction of \(L\) to \(\Lambda\) is, up to bounded error, a positive
polyhedral norm.  Hence lattice counting gives
\[
        \sum_{L(w)\le T} q_w \asymp e^T T^{r-1}.          \tag{1}
\]

The quantity \(d_q(a,b)=L(\delta(a,b))\) is an
\(\operatorname{Aut}(X)\)-invariant metric on chambers, and by (1)
\[
        |B_q(c,T)|\asymp e^T T^{r-1}.                    \tag{2}
\]
Let \(K=G_c\), a compact open subgroup, and set
\(
        F_T=\{g\in G:d_q(c,gc)\le T\}.
\)
Since \(G\) has finitely many chamber orbits, \(Gc\) is coarsely dense in \(C\);
bounded valence and (2) therefore imply
\[        \lambda(F_T)\asymp e^T T^{r-1}.                  \tag{3}
\]

For \(D=\max_{s\in S}d_q(c,sc)\), the triangle inequality gives
\(
F_TSF_T^{-1}\subseteq F_{2T+D}.
\)
Hence (3) yields
\[
\frac{\lambda(F_TSF_T^{-1})}{\lambda(F_T)^2}
\ll T^{1-r}\longrightarrow0,
\]
and \Cref{thm:lc-criterion} applies.
\end{proof}

\subsection{Direct products}

\begin{proof}[Proof of \Cref{cor:lc-products}]
Apply \Cref{lem:product-neighbourhoods} and \Cref{thm:lc-criterion} with $S = K_1 \times K_2$.  
\end{proof}

\end{document}